\documentclass[a4paper]{amsart}

\usepackage[margin=1.4in]{geometry}

\usepackage{graphicx, stmaryrd} 
\usepackage{enumerate}  
\usepackage{hyperref}   
\usepackage[square,numbers]{natbib}     
\usepackage{amsthm}     
\usepackage{amsmath}    
\usepackage{amssymb}    
\usepackage{amsxtra}    
\usepackage{eucal}      
\usepackage[bbgreekl]{mathbbol}   
\usepackage{comment}
\usepackage{tikz}

\usetikzlibrary{matrix}
\usetikzlibrary{patterns, shapes}
\usetikzlibrary{arrows}
\usetikzlibrary{calc,3d}
\usetikzlibrary{decorations,decorations.pathmorphing, decorations.pathreplacing}
\usetikzlibrary{through}
\tikzset{every picture/.style={baseline=-.65ex}}
\tikzset{ext/.style={circle, draw,inner sep=1pt},int/.style={circle,draw,fill,inner sep=1pt},nil/.style={inner sep=1pt}}
\tikzset{exte/.style={circle, draw,inner sep=3pt},inte/.style={circle,draw,fill,inner sep=3pt}}
\tikzset{diagram/.style={matrix of math nodes, row sep=3em, column sep=2.5em, text height=1.5ex, text depth=0.25ex}}
\tikzset{diagram2/.style={matrix of math nodes, row sep=0.5em, column sep=0.5em, text height=1.5ex, text depth=0.25ex}}
\tikzset{every loop/.style={draw}}

\usepackage{tikz-cd}

\theoremstyle{plain}

\newtheorem{thm}{Theorem}[section]

\newtheorem{prop}[thm]{Proposition}

\newtheorem{lemma}[thm]{Lemma}
\newtheorem{cor}[thm]{Corollary}
\theoremstyle{definition}

\newtheorem{ex}[thm]{Example}

\newcommand{\Q}{\mathbb{Q}}

\newcommand{\F}{\mathbb{F}}

\DeclareMathOperator{\vdim}{dim}

\DeclareMathOperator{\ad}{ad}

\newcommand{\Z}{\mathbb{Z}}

\newcommand{\grt}{\mathfrak{grt}_1}
\usepackage{todonotes}

\newcommand{\Lie}{\mathrm{Lie}}

\author{Thomas Willwacher}
\address{Department of Mathematics\\ ETH Zurich\\ R\"amistrasse 101 \\ 8092 Zurich, Switzerland}
\email{thomas.willwacher@math.ethz.ch}

\title{The Deligne--Drinfeld conjecture after OpenAI}

\date{}

\begin{document}

\begin{abstract}
We provide a simplified and more human-friendly version of the proof of the Deligne--Drinfeld conjecture recently given by OpenAI \cite{OpenAI}.
\end{abstract}

\maketitle

\section{Introduction}
Let $K$ be a field and let $\Lie_K(x,y)$ be the free Lie algebra in letters $x$ and $y$ over $K$. It is naturally graded by weight, with $x$ and $y$ carrying weight 1 each, and we denote the homogeneous part of weight $n$ by $\Lie_K(x,y)_n$.

The Grothendieck--Teichmüller Lie algebra $\grt(K)$ is a weight graded Lie algebra whose part $\grt(K)_n\subset \Lie_K(x,y)_n$ of weight $n$ consists of the elements $\psi(x,y)\in \Lie_K(x,y)_n$ satisfying the equations
\begin{align}
\psi(x,y)+\psi(y,x)&=0,\label{eq:antisymmetry}
\qquad \qquad\text{(antisymmetry)}
\\
\psi(x,y)+\psi(y,-x-y)+\psi(-x-y,x)&=0,\label{eq:three-cycle}
\qquad \qquad\text{(3-cycle or hexagon)}
\\
\label{eq:cabling}
[x,\psi(x,y)] - [x+y,\psi(-x-y,y)] &=0, 
\qquad \qquad\text{(cabling)}
\\
\psi(t_{12},t_{23}+t_{24})+\psi(t_{13}+t_{23},t_{34})
 &=\psi(t_{23},t_{34})+\psi(t_{12}+t_{13},t_{24}+t_{34})
 \notag\\
 &\qquad
   +\psi(t_{12},t_{23}), \qquad\text{(pentagon)}
   \label{eq:pentagon}
\end{align}
where the last (pentagon) equation takes values in the Drinfeld--Kohno Lie algebra $\mathfrak t_4(K)$, generated by symbols $t_{ij}=t_{ji}$ for $1\leq i\neq j\leq 4$ with relations 
\begin{equation}\label{eq:braid}
 [t_{ij},t_{k\ell}]=0\quad(i,j,k,\ell\text{ distinct}),\qquad
 [t_{ij},t_{ik}+t_{jk}]=0\quad(i,j,k\text{ distinct}).
\end{equation}

The Lie bracket on $\grt(K)$ is the Ihara bracket
\begin{equation}\label{eq:ihara}
 \{\psi,\varphi\}=[\psi,\varphi]+D_\psi(\varphi)-D_\varphi(\psi),
 \qquad \text{with } D_\psi(x)=0,\quad D_\psi(y)=[y,\psi],
\end{equation}
where $D_\psi$ is extended as a derivation.
We remark that some of the defining equations \eqref{eq:antisymmetry}-\eqref{eq:pentagon} of $\grt(K)$ are actually redundant. For example, if $\mathrm{char}(K)=0$ then Drinfeld \cite[Proposition~5.7]{Drinfeld} (see also \cite[Theorem~5]{BarNatan}) showed that the cabling equation \eqref{eq:cabling} may be omitted, and later work of Furusho \cite[Theorem 3]{Furusho} showed that the pentagon equation \eqref{eq:pentagon} already implies all other equations in weight $n\geq 3$. 

Let 
\[
\mathfrak f(K) = \Lie_K(\sigma_3,\sigma_5,\sigma_7,\dots)
\]
be a free Lie algebra, which we equip with a weight grading by assigning $\sigma_m$ degree $m$.
Let 
\[
d_n := \vdim_K \mathfrak f(K)_n
\]
be the dimension of the weight $n$ part $\mathfrak f(K)_n\subset \mathfrak f(K)$. It is independent of the choice of $K$.
By work of Drinfeld \cite{Drinfeld} there is a map of weight graded Lie algebras 
\begin{equation}\label{equ:DD map}
\mathfrak f(\Q) \to \grt(\Q) 
\end{equation}
sending the generator $\sigma_{2j+1}$ to an element of $\grt(\Q)_{2j+1}$ with nonzero coefficient of $\ad_x^{2j}y$.
The Deligne--Drinfeld conjecture states that this map is an isomorphism.
F. Brown \cite{Brown} already showed that the map is injective for one choice of generators $\sigma_{2j+1}$. By a minor improvement of Brown's argument by the author \cite{WillwacherFreeness}, injectivity in fact holds for any choice of $\sigma_{2j+1}$ with nontrivial $\ad_x^{2j}y$-coefficient.
The remaining part of the Deligne--Drinfeld conjecture was solved by OpenAI \cite{OpenAI}, by showing the following:

\begin{thm}\label{thm:main 1}
    For each $n=1,2,\dots$
    \[
    \vdim_\Q \grt(\Q)_n \leq d_n.
    \]
\end{thm}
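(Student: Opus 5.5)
Since $\dim_K \mathfrak f(K)_n = d_n$ does not depend on $K$, the plan is to work over a finite field first. Write $\grt(\Z)_n$ for the elements of the free $\Z$-Lie algebra $\Lie_\Z(x,y)_n$ that satisfy \eqref{eq:antisymmetry}--\eqref{eq:pentagon}. This is a saturated sublattice of $\grt(\Q)_n$, defined as the kernel of an integral linear map, and its rank equals $\dim_\Q\grt(\Q)_n$. Reducing modulo a prime $p$ gives an injection $\grt(\Z)_n\otimes\F_p\hookrightarrow\grt(\F_p)_n$, because the defining equations are integral and the lattice is saturated. Therefore
\[
\dim_\Q\grt(\Q)_n\le\dim_{\F_p}\grt(\F_p)_n ,
\]
and it suffices to prove $\dim_{\F_p}\grt(\F_p)_n\le d_n$ for one suitable prime $p$, for instance any $p>n$, so that the factorials appearing in weight $\le n$ are invertible.

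Over $K=\F_p$ I would proceed by induction on weight, using a ``leading term'' or depth filtration argument. The key steps are these.

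First, define the linear map $\grt(K)_n\to K$ that extracts the coefficient of $\ad_x^{n-1}y$, i.e.\ the depth-one part. Using antisymmetry and the 3-cycle equation, I would show that this coefficient vanishes for even $n$ and that the depth-one component is determined by it.

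Second, fix elements $\sigma_{2j+1}\in\grt(K)$ with nonzero $\ad_x^{2j}y$-coefficient, when they exist. Consider the Lie subalgebra they generate under the Ihara bracket \eqref{eq:ihara}, and the induced map $\mathfrak f(K)\to\grt(K)$.

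Third, the main claim is that this map is surjective in each weight. Given $\psi\in\grt(K)_n$, subtract a multiple of $\sigma_n$ to kill its depth-one part. Then show that an element of $\grt(K)$ of depth $\ge d$ has its depth-$d$ leading part in the image of the associated graded of the bracket structure, and proceed by induction on depth.

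Surjectivity then gives $\dim\grt(K)_n\le d_n$. If generators fail to exist in some weight, the bound only improves.

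The main obstacle is the third step: controlling the associated graded of the depth filtration on $\grt$. Known depth-graded phenomena, namely the Ihara--Takao period-polynomial relations in depth $4$ and cusp forms, show that the naive depth-graded map is not surjective. Depth alone therefore cannot work, and a more clever filtration or a direct linear-algebra argument is needed.

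What I would try first is to use the pentagon equation \eqref{eq:pentagon} in $\mathfrak t_4$, projected to the free Lie algebra $\Lie(t_{12},t_{23})$ after killing $t_{34}$ and similar variables. The aim is to derive from it an explicit triangular system showing that $\psi$ is determined by a family of coefficients indexed by Lyndon words in the odd letters $\sigma_{2j+1}$. Counting these free parameters would give exactly $d_n$, which in turn bounds $\dim_K\grt(K)_n$ from above by injecting $\grt(K)_n$ into $K^{d_n}$.

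If this direct route stalls, the fallback is to bound the dimension of the associated graded with respect to the coradical (depth) filtration by that of the linearized double-shuffle or Ihara-type algebra. I would then use the known count of the latter together with the correction coming from cusp forms, which is the Broadhurst--Kreimer type bookkeeping. I would expect this correction to be the genuinely hardest part.
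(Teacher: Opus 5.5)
Your reduction to a finite field is correct and is the same as the paper's; the saturated-lattice phrasing is a careful version of ``rank over $\F_p$ $\leq$ rank over $\Q$''. The gap is everything after that. Your third step, that the Ihara subalgebra generated by chosen $\sigma_{2j+1}$ fills out $\grt(K)_n$, is not an intermediate lemma. It is the whole content of the theorem, namely the open surjectivity half of the Deligne--Drinfeld conjecture, and you give no argument for it. You yourself note that induction on depth fails because of the Ihara--Takao relations coming from cusp forms. The ``triangular system from the pentagon indexed by Lyndon words'' is only an aspiration. The fallback rests on statements that are themselves open: neither the dimensions of the linearized double shuffle Lie algebra nor the Broadhurst--Kreimer depth-graded count is known, so no upper bound can be extracted from them.

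The choice $p>n$ also throws away the only leverage the reduction offers. The characteristic-$p$ identity $\ad_x^p=\ad_{x^p}$ only affects weights $\geq p$, so in weight $n<p$ you face essentially the characteristic-zero problem. You would additionally need that the integral system has no rank drop at $p$, which nothing in your plan justifies.

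The missing idea is to take $p=2$. There $\ad_x^2=\ad_{x^2}$, so the ideal $(y)\supset\grt(\F_2)$ is identified with the ideal $(B,C)$ in the free Lie algebra $\Lie(A,B,C)$ via $A=x^2$, $B=y$, $C=[x,y]$, and one sets $\Phi(\psi)=P(0,B,C)$. Evaluate the pentagon in an explicit representation of $\mathfrak t_4$ on $\F_2\langle A,B,C\rangle\otimes\F_2[\varepsilon,\eta]/(\varepsilon^2,\eta^2)$. This uses $\partial_\varepsilon,\partial_\eta$, whose relation $[\varepsilon,\partial_\varepsilon]=1$ holds only because $2=0$. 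The result is the reconstruction formula $P=f(E,L_C+T)(1)+f(E+L_B,T)(1)$, so $\Phi$ is injective. A further operator substitution, combined with the symmetry relation, gives $f=f(d,L_C)(1)$ with $d^2=0$. This forces $f\in\Lie(C,[B,C])$, whose weight-$n$ part has dimension $d_n$ for $n\geq3$ by Lazard elimination; weights $1,2$ are checked by hand.

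Note also that in characteristic $2$ depth is not the organizing principle at all. The generators $\sigma_5,\sigma_7$ of $\grt(\F_2)$ have no $\ad_x^{2m}y$ term, so your first two steps would not even get started there.
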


In the following we shall provide a streamlined and significantly simplified version of the proof of OpenAI. We also remark that OpenAI \cite{OpenAI} actually shows not only the inequality in Theorem \ref{thm:main 1}, but also provides an independent proof of injectivity of \eqref{equ:DD map}. We shall see in Section \ref{sec:injects} below how this can be realized in our approach as well. This is not strictly necessary for the proof of the Deligne--Drinfeld conjecture, though.

\begin{ex}\label{ex:low}
In weight one \eqref{eq:antisymmetry} forces any $\psi\in \grt(\F_2)$ to have the form $\psi(x,y)=a(x+y)$ with $a\in \F_2$. Inserting into \eqref{eq:pentagon} yields $a(t_{12}+t_{34})=0$, hence $a=0$ and $\grt(\F_2)_1=0$. In weight two necessarily $\psi(x,y)=a [x,y]$ and \eqref{eq:three-cycle} yields $3a [x,y]=0$, hence again $a=0$ and $\grt(\F_2)_2=0$. The first non-trivial weight is weight 3, with $\grt(\F_2)_3$ spanned by $\sigma_3=[x,[x,y]]+[y,[y,x]]$.
\end{ex}

\subsection{Reduction to characteristic 2}
\label{sec:2reduction}
The first reduction is the following.
Note that $\Lie_K(x,y)_n$ has a basis of Lie words, independently of $K$, and likewise for $\mathfrak t_4(K)$, see \cite[\S1.5.2, p.~197]{DarneBraids}. Furthermore, $\grt(K)_n \subset \Lie_K(x,y)_n$ is the set of solutions of a linear system with integral coefficients, independent of $K$.
The rank of an integer matrix over a finite field $\F_p$ is always less than or equal to its rank over $\Q$. Therefore we have 
\[
\vdim \grt(\Q)_n \leq \vdim \grt(\F_p)_n.
\]
Hence Theorem \ref{thm:main 1} is immediately implied by showing that $\vdim_{\F_2} \grt(\F_2)_n \leq d_n$.
We will actually show the following stronger result.
\begin{thm}\label{thm:main 2}
For each $m\geq1$, recursively define the elements
\begin{equation}\label{eq:generator-recursion}
 \sigma_3=[x+y,[x,y]],\qquad
 \sigma_{2m+3}=\{[x,y],\sigma_{2m+1}\}\quad(m\geq1),
\end{equation}
where the braces denote the Ihara bracket.
Then $\sigma_{2m+1}$ is a homogeneous Lie polynomial of weight $2m+1$ belonging to $\grt(\F_2)$. These elements freely generate $\grt(\F_2)$ under the Ihara bracket. In particular,
\[
 \vdim_{\F_2}\grt(\F_2)_n=d_n\qquad(n\geq1).
\]
\end{thm}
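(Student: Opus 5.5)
We split Theorem \ref{thm:main 2} into three parts: membership of the $\sigma_{2m+1}$ in $\grt(\F_2)$, freeness of the Lie subalgebra they generate, and the upper bound $\vdim_{\F_2}\grt(\F_2)_n\le d_n$. The first part is nearly formal. $\sigma_3$ lies in $\grt(\F_2)_3$ by the weight-three computation of Example \ref{ex:low}. The element $[x,y]$ is itself \emph{not} in $\grt(\F_2)$, since it fails the 3-cycle equation with defect $3[x,y]=[x,y]\neq 0$. We therefore check directly that $\mathrm{ad}_{\{[x,y],-\}}$ preserves $\grt(\F_2)$ in characteristic $2$. One route is to verify that, over $\F_2$, $[x,y]$ satisfies the antisymmetry, cabling and pentagon equations: pentagon, because $t_{12}$ and $t_{34}$ commute, so the defect reduces to a combination of $[t_{12},t_{23}]$-type terms that cancel mod $2$. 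Its only failure is the 3-cycle equation, and we then show that the Ihara action of this failure term on any $\psi\in\grt$ vanishes. Concretely, we view $\{[x,y],-\}$ as a derivation of the defining system and compute its effect on each equation mod $2$. Induction on $m$ then gives $\sigma_{2m+1}\in\grt(\F_2)_{2m+1}$.

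For freeness, we use a leading-term (associated graded) argument. We filter $\Lie_{\F_2}(x,y)$ by the number of $y$'s (the depth). In depth $1$, the Ihara bracket $\{[x,y],-\}$ acts on $\ad_x^{k}y$ by multiplying by $\ad_x^2$ up to lower-order corrections, so $\sigma_{2m+1}\equiv \ad_x^{2m}y$ plus terms of depth at least $2$. The depth-graded Ihara algebra is then the relevant object. Adapting Brown's argument from \cite{Brown} and \cite{WillwacherFreeness} to characteristic $2$ gives injectivity of $\mathfrak f(\F_2)\to\grt(\F_2)$: we show that the depth-graded images generate a free Lie algebra by exhibiting a polynomial model (Ihara's coaction on $\F_2[x_1,\dots,x_r]$). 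The characteristic-$2$ subtleties must be controlled here, for instance by using the $2$-adic version of Brown's depth-$1$ and depth-$2$ injectivity.

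The main obstacle is the upper bound $\vdim\grt(\F_2)_n\le d_n$. Our plan is to produce an injective linear map from $\grt(\F_2)$ into a space whose dimension we can count. The leading candidate is the depth filtration combined with a strong constraint in characteristic $2$. Taking the pentagon equation \eqref{eq:pentagon} modulo $2$ and restricting to lowest depth, we want to show three things. First, an element of $\grt(\F_2)$ of depth $r$ has its depth-$r$ component in a space that the linearized pentagon equation (the ``double shuffle'' / period polynomial constraints mod $2$) cuts down. Second, in characteristic $2$ the cusp-form obstruction disappears, so these constraints are stronger than over $\Q$. Third, the resulting depth-graded dimensions sum to $d_n$. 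We then run an inductive peeling argument. Given $\psi\in\grt(\F_2)_n$, its lowest-depth part is hit by the depth-graded part of some Lie word in the $\sigma$'s. Subtracting that word raises the depth, and the process terminates because depth is bounded by $n$. This shows that the $\sigma$'s span $\grt(\F_2)$, and combined with freeness it gives equality of dimensions. The hardest computation will be this mod-$2$ analysis of the linearized pentagon in each depth. We would first handle depth $1$ and depth $2$ explicitly, looking for a pattern, for example that the linearized equations force the depth-$r$ part to be a polynomial in the $\ad_x^{2k}y$ satisfying $\F_2$-linear shuffle relations, before attempting the general induction.
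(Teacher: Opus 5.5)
Your proposal has a genuine gap: the freeness step rests on a false claim, and the upper bound is a wish list with no mechanism.

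\textbf{Freeness.} The Ihara bracket respects the depth filtration. Every term of $[\psi,\varphi]$, $D_\psi\varphi$ and $D_\varphi\psi$ contains at least $\mathrm{depth}(\psi)+\mathrm{depth}(\varphi)$ letters $y$. Since $[x,y]$ has depth one, $\{[x,y],-\}$ raises depth. Hence $\sigma_{2m+1}$ has depth $\geq m$, and for $m\geq2$ it has no $\ad_x^{2m}y$ term at all. The paper notes this for $\sigma_5$ and $\sigma_7$; for example $\sigma_5=[[x,y],\sigma_3]$. This is not an artifact of the chosen generators. By the theorem, $\grt(\F_2)_5$ is one-dimensional, spanned by $\sigma_5$, so no weight-$5$ element of $\grt(\F_2)$ has a nonzero depth-one part. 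There is therefore no characteristic-$2$ analogue of Drinfeld's map with depth-one leading terms on which a Brown-type argument could run.

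A smaller error in your first part: $[x,y]$ fails cabling mod $2$ as well as the 3-cycle equation, since $C_{[x,y]}(x,y)=[y,[y,x]]\neq0$. This is why the paper's closure proof has two extra steps. It checks that the defect $[t_{24},C_\rho(t_{12},t_{23})]+[t_{13},C_\rho(t_{34},t_{23})]$ of $\delta_\rho$ vanishes for $\rho=[x,y]$. It then proves cabling for $\{\psi,\varphi\}$ via $\tau D_\rho\tau=D_\rho+\ad_{a_\rho}$, with the correction term $\lambda_\rho y^2$.

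\textbf{Upper bound.} This is a research programme rather than an argument. Nothing indicates why the mod-$2$ linearized pentagon in each depth should have total dimension $d_n$. Nor is there a reason the lowest-depth part of an arbitrary $\psi$ should be hit by a Lie word in the $\sigma$'s; by the previous paragraph, the $\sigma$'s do not even have the leading terms you would peel with. The analogous depth-graded question over $\Q$ is notoriously hard, so depth is the wrong tool here.

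The missing idea is a change of coordinates, not a depth analysis:
\begin{enumerate}
\item Because $\ad_x^2=\ad_{x^2}$ in characteristic $2$, the substitution $A\mapsto x^2$, $B\mapsto y$, $C\mapsto[x,y]$ identifies the ideal $(y)\supset\grt(\F_2)$ with $(B,C)\subset\Lie(A,B,C)$. One then sets $\Phi(\psi)=P(0,B,C)$.
\item Evaluating the pentagon in the explicit representation $\rho$ of $\mathfrak t_4$ on $R\otimes\F_2[\varepsilon,\eta]/(\varepsilon^2,\eta^2)$ gives the reconstruction formula $P=f(E,L_C+T)(1)+f(E+L_B,T)(1)$. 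So $\Phi$ is injective.
\item Applying $\theta$ together with the symmetry relation yields the fixed-point identity $f=f(d,L_C)(1)$ with $d^2=0$. This forces $f\in\Lie(C,[B,C])$, which has dimension $d_n$ in each weight $n\geq3$.
\item Freeness then needs no separate argument. $\Phi^{ex}$ is a Lie morphism (via $\chi(x)=\partial$, $\chi(y)=L_B$), hence an isomorphism $\grt^{ex}(\F_2)\cong\Lie(C,[B,C])$. It sends $\sigma_{2m+1}$ to $\ad_C^mB$, which are exactly the free generators.
\end{enumerate}
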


The first three generators are
\[
 \sigma_3=[x+y,[x,y]],\qquad
 \sigma_5=[[x,y],\sigma_3], \qquad 
\sigma_7=[[x,y],\sigma_5]+[x+y,[[x,[x,y]],\sigma_3]].
\]
In particular, note that $\sigma_5$ and $\sigma_7$ already do not have a depth 1 term $\ad_x^{2m}y$. The characteristic 2 generators $\sigma_{2m+1}$ should be considered as distinct from the like-named rational generators of $\grt(\Q)$. 
For the remainder of the paper we shall be concerned with showing Theorem~\ref{thm:main 2}.

\subsection*{Acknowledgements}
The author is very grateful to Leila Schneps and Benjamin Enriquez for discussing the problem and sharing their insights on the OpenAI proof. In particular, their reworked version of the OpenAI proof inspired the simplified formula for the representation $\rho$ of Lemma \ref{lem:t4-representation} below, without which the proofs of this paper would have been somewhat lengthier, and the results not as concise.
The author also thanks Dan Petersen for helpful discussions. 

\subsection*{AI usage statement}
The purpose of this paper is to clarify and simplify an AI proof of the Deligne--Drinfeld conjecture. AI tools have also been used extensively in all stages of this work, helping the author in understanding, restructuring and simplifying the original arguments of \cite{OpenAI}. 

\section{The map to \texorpdfstring{$\Lie(B,C)$}{Lie(B,C)}}
\subsection{ABC coordinates}
From this point onwards in the paper we almost always work over the ground field $\F_2$. Where possible we will drop references to the ground field $K$ from notation like $\Lie_K(\cdots)$, and implicitly understand that $K=\F_2$.

Let $\F_2\langle A,B,C\rangle$ be the free associative algebra in letters $A,B,C$. We endow it with a weight grading such that $A$ and $C$ are of weight 2 and $B$ is of weight 1, and we denote by $\F_2\langle A,B,C\rangle_n\subset \F_2\langle A,B,C\rangle$ the part of weight $n$.
Similarly, we denote by $\F_2\langle x,y\rangle$ the free associative algebra in letters $x$ and $y$, both considered of weight $1$.
In each case we understand the free Lie algebras as subsets of the free associative algebras 
\[
\Lie(A,B,C) \subset \F_2\langle A,B,C\rangle, \qquad
\Lie(x,y) \subset \F_2\langle x,y\rangle.
\]

\begin{lemma}\label{lem:abc}
The assignments
\[
 A\longmapsto x^2,\qquad B\longmapsto y,\qquad C\longmapsto[x,y]
\]
define a weight preserving injective map of associative algebras
$$
\iota \colon \F_2\langle A,B,C\rangle \to \F_2\langle x,y\rangle.
$$
The restriction of $\iota$ identifies the Lie ideal $(B,C)\subset\Lie(A,B,C)$ generated by $B$ and $C$ isomorphically with the Lie ideal $(y)\subset\Lie(x,y)$ generated by $y$.
\end{lemma}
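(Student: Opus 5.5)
Weight preservation is immediate: $x^2$, $y$ and $[x,y]$ have weights $2$, $1$, $2$, matching the weights of $A$, $B$, $C$. Since $\F_2\langle A,B,C\rangle$ is free, the assignment extends uniquely to an algebra map. Over $\F_2$ we have $[x,y]=xy+yx$. The plan is to prove injectivity by an explicit change of basis, then deduce the Lie statement from Lazard elimination.

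For injectivity, I will use the left ideal/right module structure. Any word in $x,y$ can be written uniquely as $x^{\epsilon}\,x^{2k_1}y\,x^{2k_2}y\cdots$. A cleaner route is to consider the elements $u=x^2$, $v=y$, $w=xy+yx$ and show that the words in $u,v,w$ map to linearly independent elements. Define a leading-term order on words in $x,y$: choose a monomial order, for instance degree-lexicographic with $x>y$ read left to right. Then the leading words are $\mathrm{lt}(u)=x^2$, $\mathrm{lt}(v)=y$ and $\mathrm{lt}(w)=xy$. For a word $W$ in $A,B,C$, the leading monomial of $\iota(W)$ is the concatenation of these leading words, because the order is multiplicative. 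The set $\{x^2,\,y,\,xy\}$ is a code: a word in $x,y$ factors uniquely into these pieces, and a word is in their image exactly when it does not end in $x$ and has no odd run of $x$'s not followed by $y$. One checks unique decodability by scanning left to right. A run of $x$'s of length $r$ followed by $y$ decodes as $(x^2)^{\lfloor r/2\rfloor}$, followed by $xy$ if $r$ is odd and by $y$ if $r$ is even. Hence distinct $ABC$-words have distinct leading monomials, so their images are linearly independent, and $\iota$ is injective.

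For the Lie statement, $\iota$ maps $\Lie(A,B,C)$ into $\Lie(x,y)$ exactly when $x^2$ is a Lie element. In characteristic $2$ it is not: $x^2$ is primitive but not Lie. So the claim should be read inside the restricted setting, or, more carefully, only for the ideal $(B,C)$. I will use Lazard elimination over $\Z$, and hence over any field. The ideal $(y)\subset\Lie(x,y)$ is freely generated as a Lie algebra by the elements $\ad_x^k y$ for $k\ge0$. Similarly, the ideal $(B,C)\subset\Lie(A,B,C)$ is freely generated by $\ad_A^k B$ and $\ad_A^k C$ for $k\ge0$. Now compute in characteristic $2$: $\ad_{x^2}=\ad_x^2$, since $[x^2,z]=x^2z+zx^2=\ad_x^2 z$ over $\F_2$. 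Hence $\iota(\ad_A^kB)=\ad_x^{2k}y$ and $\iota(\ad_A^kC)=\ad_x^{2k+1}y$. Thus $\iota$ restricted to $(B,C)$ sends a free generating set bijectively onto a free generating set of $(y)$, and it is a Lie map because $\iota$ is an algebra map and Lie elements of $(B,C)$ are iterated commutators. It is therefore an isomorphism of Lie algebras, and in particular it lands in $\Lie(x,y)$.

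The main obstacle is the identity $\ad_{x^2}=\ad_x^2$, which is where characteristic $2$ is essential. The other step needing care is verifying Lazard elimination with integral bases, so that it holds over $\F_2$; this can be cited from the standard literature. The leading-term decoding argument is routine bookkeeping.
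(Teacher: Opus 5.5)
Your proof is correct and is essentially the paper's. Injectivity comes from the prefix code $\{xx,xy,y\}$ of leading words under a multiplicative order with $x>y$. The Lie statement comes from $\ad_{x^2}=\ad_x^2$, which gives $\iota(\ad_A^iB)=\ad_x^{2i}y$ and $\iota(\ad_A^iC)=\ad_x^{2i+1}y$.

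The only real variation is how bijectivity on $(B,C)$ is obtained. You match free generating sets from Lazard elimination on both sides, so bijectivity on $(B,C)$ does not even need the associative injectivity. The paper only uses that the $\ad_x^jy$ generate $(y)$. It proves $\iota((B,C))=(y)$ by two inclusions and then invokes injectivity of $\iota$.

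One harmless slip: your description of the code monoid as words that ``do not end in $x$'' is wrong. For example, $x^2=\iota(A)$ itself ends in $x$; the correct condition is that the trailing run of $x$'s has even length. Your argument only uses unique decodability, which holds because the code is prefix, so nothing breaks.
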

\begin{proof}
With the symbol ordering $x>y$ the leading words $xx,xy,y$ form a prefix code. Products
of these words have unique factorizations, proving
injectivity of $\iota$ on $\F_2\langle A,B,C\rangle$, and hence also injectivity of its restriction to $\Lie(A,B,C)$. The elements $e_j=\ad_x^jy$,
$j\ge0$, generate $(y)$. In characteristic two $\ad_x^2=[x^2,-]$ and hence
\[
 e_{2i}
 =\iota(\ad_{A}^i B)
 ,\qquad e_{2i+1}=\iota(\ad_A^iC).
\]
Thus every generator of $(y)$ belongs to the image
$\iota((B,C))$ of the ideal $(B,C)$ and thus $(y)\subset \iota((B,C))$. Conversely,
$\iota(B),\iota(C)\in(y)$, and $\ad_{x^2}=\ad_x^2$ preserves
$(y)$. Hence $\iota((B,C))\subset(y)$, so that $\iota((B,C))= (y)$.
\end{proof}

By Lemma \ref{lem:abc} there is a weight preserving injective map of Lie algebras 
\[
\iota^{-1} \colon (y) \to \Lie(A,B,C).
\]
Since $\grt(\F_2)$ is contained in $(y)$, we may hence consider $\grt(\F_2)$ as a subspace of $\Lie(A,B,C)$ via the embedding $\iota^{-1}$.
In particular, every $\psi\in\grt(\F_2)_n$, $n\geq3$, has
a unique preimage $P=\iota^{-1}\psi\in(B,C)_n$, with
\begin{equation}\label{eq:psi-from-P}
 \psi(x,y)=P(x^2,y,[x,y]).
\end{equation}
In these generators
the symmetry and pentagon relations read
\begin{equation}\label{eq:symmetry-P}
 P(x^2,y,[x,y])=P(y^2,x,[x,y])
\end{equation}
in $\F_2\langle x,y\rangle$, and
\begin{equation}\label{eq:pentagon-naive}
\begin{aligned}
0={}&P\bigl(t_{12}^2,t_{23}+t_{24},[t_{12},t_{23}+t_{24}]\bigr)
+P\bigl((t_{13}+t_{23})^2,t_{34},[t_{13}+t_{23},t_{34}]\bigr)\\
 &+P\bigl(t_{23}^2,t_{34},[t_{23},t_{34}]\bigr)+P\bigl((t_{12}+t_{13})^2,t_{24}+t_{34},
                   [t_{12}+t_{13},t_{24}+t_{34}]\bigr)\\
 &+P\bigl(t_{12}^2,t_{23},[t_{12},t_{23}]\bigr)
\end{aligned}
\end{equation}
in $U(\mathfrak t_4)$. These follow by substituting
\eqref{eq:psi-from-P} into the defining equations.

\subsection{The projection and its image}
We define the weight preserving linear map
\[
 \Phi:\grt(\F_2)\longrightarrow\Lie(B,C),
 \qquad \psi\longmapsto P(0,B,C),\quad P=\iota^{-1}\psi.
\]

\begin{thm}\label{thm:inj}
The map $\Phi$ is injective and its image is contained in the Lie subalgebra
\[
 \Lie(C,[B,C])\subset\Lie(B,C).
\]
\end{thm}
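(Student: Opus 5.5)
The plan is to obtain both statements from the defining equations after specializing to quotients in which $A=x^2$ (or its analogue $t_{ij}^2$) vanishes. The guiding observation is that in $R:=\F_2\langle x,y\rangle/(x^2)$ the image of $\psi=\iota(P)$ is exactly $\iota(P(0,B,C))$, read through the map $B\mapsto y$, $C\mapsto xy+yx$. I would first check, by the same prefix-code argument as in Lemma~\ref{lem:abc}, that the subalgebra of $R$ generated by $y$ and $xy+yx$ is free on these two elements. Then $\Phi(\psi)$ is faithfully recorded by $\psi \bmod x^2$.

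\textbf{Image in $\Lie(C,[B,C])$.} Write $Q=\Phi(\psi)\in\Lie(B,C)$. Then $Q$ decomposes as $\lambda B+Q'$, where $Q'$ lies in the ideal generated by $C$, which is freely generated by the elements $\ad_B^kC$ for $k\ge0$. In characteristic $2$ we have $\ad_B^2=\ad_{B^2}$, so $\Lie(C,[B,C])$ is exactly the part built from $\ad_B^kC$ with $k\le1$, up to brackets. The claim is therefore that no ``long $B$-tails'' occur. To see this I would reduce the antisymmetry equation \eqref{eq:symmetry-P} and the 3-cycle equation \eqref{eq:three-cycle} modulo $x^2$:
\[
Q(y,[x,y])\equiv P(y^2,x,[x,y]) \pmod{x^2}.
\]
On the right-hand side every occurrence of the letter $y$ comes as $y^2$ or inside $[x,y]$, since the middle slot is now $x$. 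Comparing normal forms of words in $R$ should then show that $Q$ has no linear term $\lambda B$ (this is also forced by weight $\ge3$). It should also show that in $Q'$ the letter $B$ occurs only inside brackets $[B,C]$, meaning that any $\ad_B^2$ would produce a factor $y^2$ adjacent to the wrong pattern. If this bookkeeping is too delicate, I would instead use the 3-cycle relation reduced modulo $(x+y)^2$, or the cabling equation, which is linear in $\psi$ and has a visible $x$ or $x+y$ in front.

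\textbf{Injectivity.} Suppose $\Phi(\psi)=0$, so that every Lie monomial of $P$ contains $A$. I would use the pentagon in the form \eqref{eq:pentagon-naive} and push it through a representation $\rho$ of $\mathfrak t_4$ (the one announced as Lemma~\ref{lem:t4-representation}). This $\rho$ should be chosen so that $\rho(t_{12})^2=\rho(t_{23})^2=0$ while $\rho(t_{12}+t_{13})^2$ and $\rho(t_{13}+t_{23})^2$ are not zero. Then the terms $P(t_{12}^2,\dots)$ and $P(t_{23}^2,\dots)$ collapse to $\Phi(\psi)$ evaluated at explicit elements, and so vanish by hypothesis. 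The remaining two terms give a linear identity satisfied by $P$ with genuinely nonzero first argument. The aim is to arrange $\rho$ so that this identity reads $P(\alpha,\beta,\gamma)=0$ with $\alpha,\beta,\gamma$ free, which forces $P=0$. A fallback is to argue by induction on the $A$-degree: take the lowest $A$-degree component of $P$, and show that the pentagon relation, read off at the corresponding leading order, determines that component from components of strictly lower $A$-degree. This again gives $P=0$.

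\textbf{Main obstacle.} The hard step is finding the specialization $\rho$, together with a freeness statement for the images, so that the pentagon isolates $P$ faithfully. Once that representation and the corresponding freeness (again via a leading-word or prefix-code argument) are in place, both halves of the theorem should follow by comparing leading words.
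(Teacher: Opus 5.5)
There is a genuine gap in the half concerning the image. Write $f$ for your $Q=\Phi(\psi)$. Reducing antisymmetry modulo $x^2$ gives $f(y,[x,y])\equiv P(y^2,x,[x,y])$, but this cannot exclude long $B$-tails. Since $\ad_{y^2}=\ad_y^2$ in characteristic two, an $\ad_B^2$ in $f$ on the left is simply matched by an $A$-term of $P$ on the right. The identity therefore only trades $B$-tails of $f$ against $A$-terms of $P$, and it places no constraint on the latter.

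Concretely, in weight $4$ take $\psi=\ad_x^3 y+\ad_y^2\ad_x y=(\ad_x^2+\ad_y^2)[x,y]$, i.e.\ $P=[A,C]+[B,[B,C]]$. It satisfies \eqref{eq:antisymmetry}, and it also satisfies \eqref{eq:three-cycle} over $\F_2$ (use $[x,y]=[y,x+y]=[x+y,x]$). Yet $P(0,B,C)=\ad_B^2C\notin\Lie(C,[B,C])$, since the weight-$4$ part of $\Lie(C,[B,C])$ is zero. This example also disposes of your 3-cycle fallback.

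Under $x\mapsto\partial$, $y\mapsto L_B$, your reduced identity becomes exactly the paper's \eqref{eq:thetaP}, $\theta(P)=L_f$. What is missing is a second, independent expression for $\theta(P)(1)$ in terms of $f$, and the paper extracts it from the pentagon. It applies $Q=\theta(-)(1)$ to the reconstruction formula \eqref{eq:reconstruction} and uses $QE=dQ$, $QT=0$, $QL_r=\theta(r)Q$. This gives $Q(P)=f(d,L_C)(1)$, hence the fixed-point identity \eqref{eq:image-fixed-point}, and $d^2=0$ then forces $f\in\Lie(C,[B,C])$. Cabling does happen to kill the weight-$4$ example above, but you give no mechanism by which it would control $B$-tails in general.

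For injectivity, your outline matches the paper's strategy. However, its entire content is left open: an explicit $\rho$ satisfying all of \eqref{eq:braid}. You also do not say how one of the two surviving pentagon terms is to be disposed of. In the paper's representation (Lemma~\ref{lem:t4-representation}), both surviving terms have first argument $L_A$ on $R$. The term with arguments $(L_A,L_B,L_C)$ yields $L_{P(A,B,C)}$, which is faithful by evaluation at $1$. In the other term, $L_A$ commutes with the remaining arguments $E+L_B$ and $L_A+T$. Since $P$ has no length-one terms, only $f(E+L_B,L_A+T)$ survives there.

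This gives more than $f=0\Rightarrow P=0$: it gives the explicit reconstruction formula \eqref{eq:reconstruction}. That formula is precisely what the image argument needs, so the two halves of the theorem are not independent in the way your plan suggests.
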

We prove this theorem in Section~\ref{sec:inj proof}. The dimension of the target has a simple description.

\begin{lemma}\label{lem:target}
Writing $g_j=\ad_C^jB$, one has
\begin{equation}\label{eq:target}
 \Lie(C,[B,C])=\F_2C\ltimes\Lie(g_1,g_2,\ldots),
\end{equation}
where the factor $\Lie(g_1,g_2,\ldots)$ is freely generated by the
$g_j$, $j\geq1$. In particular,
$$\vdim\Lie(C,[B,C])_n=d_n \qquad \text{for $n\geq3$}.$$
\end{lemma}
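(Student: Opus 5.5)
We first identify the subalgebra $\Lie(C,[B,C])$ with $\F_2 C\ltimes\Lie(g_1,g_2,\ldots)$, and then compute dimensions by a generating function.

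For the structural statement we use the Lazard elimination theorem. It says that for a free Lie algebra $\Lie(X\cup\{z\})$ one has a vector space decomposition $\Lie(X\cup\{z\})=\F_2 z\oplus \Lie(\ad_z^j u : u\in X, j\ge0)$. Moreover, the second summand is a free Lie algebra on the listed elements, and it is the ideal generated by $X$. This holds over any ring, in particular over $\F_2$.

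We first check that $\Lie(C,[B,C])$ is the free Lie algebra on $C$ and $g_1=[C,B]$. In characteristic 2 signs are irrelevant, and $[B,C]=g_1$. By elimination applied to $\Lie(B,C)$ with $z=C$ and $X=\{B\}$, the elements $g_0=B,g_1,g_2,\dots$ freely generate the ideal $(B)$. Hence the subalgebra generated by $g_1,g_2,\dots$ is free, and so is any subalgebra generated by a subset of free generators.

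Now $\Lie(C,g_1)$ contains every $g_j=\ad_C^{j-1}g_1$ for $j\ge1$. So it contains $\F_2C+\Lie(g_1,g_2,\dots)$. Conversely, this sum is closed under brackets. It is a subspace because the sum $\F_2C\oplus(B)$ is direct. It is closed because $[C,-]$ maps $\Lie(g_1,g_2,\ldots)$ to itself: $\ad_C$ is a derivation sending $g_j\mapsto g_{j+1}$. The sum contains $C$ and $g_1$, so the two agree. This gives \eqref{eq:target}, with $C$ acting on $\Lie(g_1,g_2,\ldots)$ by the derivation $g_j\mapsto g_{j+1}$.

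For the dimension count, note that $g_j$ has weight $2j+1$, since $C$ has weight 2 and $B$ has weight 1. So $\Lie(g_1,g_2,\ldots)$ is a free Lie algebra on one generator in each odd weight $3,5,7,\dots$. This is exactly the weight-graded generating data of $\mathfrak f$, with $g_j\leftrightarrow\sigma_{2j+1}$. The dimensions of the graded pieces of a free Lie algebra depend only on the number of generators in each weight. This follows from the Poincaré–Birkhoff–Witt theorem: $U(\Lie(V))=T(V)$, which determines the graded dimensions via the product formula, independent of the field. Hence $\vdim\Lie(g_1,g_2,\ldots)_n=d_n$ for all $n$.

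The extra summand $\F_2C$ lives in weight 2 only, so for $n\ge3$ we get $\vdim\Lie(C,[B,C])_n=d_n$.

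The only nonformal input is Lazard elimination over $\F_2$. We expect no real obstacle beyond citing it correctly and noting the characteristic-independence of the PBW dimension formula.
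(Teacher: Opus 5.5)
Your proof is correct and follows the paper's argument. Like the paper, you use Lazard elimination of $\Lie(B,C)$ with respect to $C$, the observation $[C,g_j]=g_{j+1}$ showing that $C$ and $g_1$ generate exactly $\F_2C\ltimes\Lie(g_1,g_2,\ldots)$, and the weight count $2j+1$ for $g_j$. The PBW remark is unnecessary, since $g_j\mapsto\sigma_{2j+1}$ already gives a graded isomorphism $\Lie(g_1,g_2,\ldots)\cong\mathfrak f(\F_2)$. Your opening claim that $\Lie(C,[B,C])$ is free on $C$ and $g_1$ is true but is neither proved nor needed in what follows.
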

\begin{proof}
Lazard elimination gives
\[
 \Lie(B,C)=\F_2C\ltimes\Lie(g_0,g_1,g_2,\ldots),
\]
with the $g_j$ freely generating the second factor.
Since $[C,g_j]=g_{j+1}$, the subalgebra generated by $C$ and
$g_1=[B,C]$ is exactly the right-hand side of
\eqref{eq:target}. Each $g_j$ has weight $2j+1$, whereas
$C$ has weight $2$, proving the dimension statement.
\end{proof}

\section{Proof of Theorem \ref{thm:inj}}
\label{sec:inj proof}

\subsection{A representation of \texorpdfstring{$\mathfrak t_4$}{t4}}\label{sec:representation}
Let
$R=\F_2\langle A,B,C\rangle$, with $A,C$ of weight $2$ and $B$ of weight $1$.
Write $L_r$ for left multiplication by $r$ and define derivations $E,T$ of $R$ by
\begin{equation}\label{eq:operator-derivations}
\begin{array}{c|ccc}
 &A&B&C\\\hline
 E&[A,B]&0&[A,B]\\
 T&0&[C,B]&[A,B^2].
\end{array}
\end{equation}
Let these operators act on the first factor of
$M=R\otimes\F_2[\varepsilon,\eta]/(\varepsilon^2,\eta^2)$.
Write $\varepsilon,\eta$ also
for multiplication on the second factor. Let
$\partial_\varepsilon$, $\partial_\eta$ be the derivative operators. They are well defined in characteristic two, and
\[
 [\varepsilon,\partial_\varepsilon]=[\eta,\partial_\eta]=1,\qquad \partial_\varepsilon^2=\partial_\eta^2=0,
\]
with all other commutators between $\varepsilon,\eta,\partial_\varepsilon,\partial_\eta$ zero.
Put $G=L_A+L_C+T$.

\begin{lemma}\label{lem:t4-representation}
There is a representation
$\rho:\mathfrak t_4\to\operatorname{End}_{\F_2}(M)$ given by
\begin{equation}\label{eq:t4-representation}
\begin{aligned}
 \rho(t_{12})&=\varepsilon,&
 \rho(t_{23})&=L_A\partial_\varepsilon,\\
 \rho(t_{13})&=\varepsilon+\eta+L_A(\partial_\varepsilon+\partial_\eta),&
 \rho(t_{34})&=L_B+L_C\partial_\eta,\\
\rho(t_{24})&=E+G(\partial_\varepsilon+\partial_\eta)+L_{[A,B]}\partial_\varepsilon\partial_\eta,&
\rho(t_{14})&=\eta+E+L_B+G\partial_\varepsilon+T\partial_\eta+L_{[A,B]}\partial_\varepsilon\partial_\eta.
\end{aligned}
\end{equation}
\end{lemma}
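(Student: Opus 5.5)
Since $\mathfrak t_4$ is defined by generators $t_{ij}$ and the relations \eqref{eq:braid}, the plan is to check directly that the six operators in \eqref{eq:t4-representation} satisfy these relations in $\operatorname{End}_{\F_2}(M)$. Once that is done, $\rho$ extends uniquely to a Lie algebra map. There are three relations $[t_{ij},t_{k\ell}]=0$ for disjoint pairs, and twelve relations $[t_{ij},t_{ik}+t_{jk}]=0$, four for each triangle $\{i,j,k\}$. Summing the three relations of one triangle over its vertices shows that $t_{ij}+t_{ik}+t_{jk}$ is central in the subalgebra generated by that triangle's edges. So for each triangle it suffices to show that $\Omega_{ijk}=\rho(t_{ij})+\rho(t_{ik})+\rho(t_{jk})$ commutes with two of its three edges; commutation with the third edge then follows.

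The computation is organised by $\varepsilon,\eta$-degree. Every operator is a sum of terms $X\cdot m$, where $X$ is one of $L_r$, $E$, $T$ acting on $R$, and $m$ is a monomial in $\varepsilon,\eta,\partial_\varepsilon,\partial_\eta$. The two types of factor commute with each other. The factor $\F_2[\varepsilon,\eta]/(\varepsilon^2,\eta^2)$ together with its derivatives forms a Clifford/Weyl-type algebra in characteristic 2, with $[\varepsilon,\partial_\varepsilon]=1$. So a commutator of such terms expands as
\[
[X m,X'm']=[X,X']\,mm'+X'X\,[m,m'].
\]
On the $R$-side we need the following facts:
\begin{itemize}
\item $[L_r,L_s]=L_{[r,s]}$;
\item $[D,L_r]=L_{D(r)}$ for a derivation $D$;
\item the values of $E$ and $T$ on generators, from \eqref{eq:operator-derivations};
\item $[E,T]$, which is again a derivation and is determined by its values on $A,B,C$.
\end{itemize}

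The triangles with few nontrivial terms come first. For $\{1,2,3\}$ we have $\Omega_{123}=\eta+L_A\partial_\eta$, which visibly commutes with $\varepsilon$ and with $L_A\partial_\varepsilon$. For $[t_{12},t_{34}]$ and $[t_{14},t_{23}]$ the commutators are short and vanish immediately. For example, $[\varepsilon,t_{34}]=0$ trivially, and $[L_A\partial_\varepsilon,\,t_{14}]$ reduces to $L_A$ times terms that cancel using $E(A)=[A,B]$ and $L_{[A,B]}$. The triangle $\{2,3,4\}$ gives $\Omega_{234}=E+L_A\partial_\varepsilon+L_B+L_C\partial_\eta+G(\partial_\varepsilon+\partial_\eta)+L_{[A,B]}\partial_\varepsilon\partial_\eta$. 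We test it against $t_{23}$ and $t_{34}$, which are the simplest edges.

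The main obstacle is the triangles $\{1,2,4\}$ and $\{1,3,4\}$, and the relation $[t_{13},t_{24}]=0$. In these cases $E$, $T$ and $G$ interact nontrivially, and one needs identities of the form
\[
[E,L_B]=0,\quad [T,L_B]=L_{[C,B]},\quad [E,G]+[T,\cdot]=\dots,
\]
and in particular the evaluation of $[E,T]$ on $A,B,C$. For instance, $[E,T](C)=E([A,B^2])+T([A,B])=[[A,B],B^2]+[A,[C,B]]+\dots$. The terms must cancel coefficientwise in $\varepsilon,\eta$-degrees $0$, $1$ and $2$. The first thing to try is reducing each needed operator identity to an identity between derivations, and checking it on the three generators $A,B,C$. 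Characteristic 2 is used throughout, e.g. $[B,B^2]=0$ and $\partial^2=0$. Since everything is weight-homogeneous and finitely checkable, this is routine once the bookkeeping is set up.
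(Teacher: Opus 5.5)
Your overall strategy is sound: check the relations \eqref{eq:braid} directly, using the valid reduction that for each triangle it suffices to show $\Omega_{ijk}$ commutes with two of its edges. The cases you actually work out are correct: triangle $123$, $[t_{12},t_{34}]=0$ and $[t_{14},t_{23}]=0$. But the cases you yourself call the main obstacle are never carried out: triangles $124$ and $134$ and the relation $[t_{13},t_{24}]=0$. Triangle $234$ is not done either, since you only write down $\Omega_{234}$. You predict which identities might be needed and declare the rest routine. For a lemma whose entire content is this verification, that leaves the substantive half unproved. A minor slip: there are three braid relations for each of the four triangles, not four per triangle.

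The paper removes exactly your obstacle with one observation: the six operators sum to zero. All terms cancel in pairs except the $\partial_\eta$-coefficient $L_A+L_C+G+T$, which vanishes because $G=L_A+L_C+T$. The paper then checks directly only triangles $123$ and $234$ and the relations $[t_{12},t_{34}]=[t_{13},t_{24}]=0$. Commuting $\sum_{i<j}\rho(t_{ij})=0$ with $\rho(t_{23})$ gives $[t_{14},t_{23}]=0$. Commuting it with $\rho(t_{12}),\rho(t_{24})$, respectively $\rho(t_{13}),\rho(t_{34})$, gives two braid relations on $124$, respectively $134$.

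In your language, $\Omega_{124}=\rho(t_{13})+\rho(t_{23})+\rho(t_{34})=\varepsilon+\eta+L_B+(L_A+L_C)\partial_\eta$ and $\Omega_{134}=\rho(t_{12})+\rho(t_{23})+\rho(t_{24})$. So their commutators with $t_{12},t_{24}$, respectively $t_{13},t_{34}$, reduce to relations already checked. Consequently $\rho(t_{14})$ never enters a commutator. The identities you anticipated, $[E,T]$ and $[E,G]$, are never needed; only the short list of $[D,L_f]=L_{D(f)}$-type identities is.

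Nor is $[t_{13},t_{24}]$ hard. Put $s=\varepsilon+\eta$ and $\delta=\partial_\varepsilon+\partial_\eta$; then $[s,\delta]=0$ and $\delta^2=0$. Hence $[\rho(t_{13}),\rho(t_{24})]=L_{[A,B]}\delta+L_{[A,B]}\delta=0$, using only $E(A)=[A,B]$.

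Your brute-force route would succeed if executed, but the sum-zero identity is what makes the verification short and complete.
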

\begin{proof}
First extend the assignments to the free Lie algebra on the
six generators. The rule $[D,L_f]=L_{D(f)}$ for a derivation
$D$ gives
\[
\begin{gathered}
 {}[E,L_A]=[E,L_C]=L_{[A,B]},\qquad [E,L_B]=[T,L_A]=0,\\
 {}[L_A,G]=L_{[A,C]},\qquad [G,L_B]=L_{[A,B]},\\
 {}[G,L_C]+[L_{[A,B]},L_B]=L_{[A,C]}.
\end{gathered}
\]
The last equality uses $[[A,B],B]=[A,B^2]$. These identities yield
\begin{equation}\label{eq:t4-braid-checks}
\begin{gathered}
 \rho([t_{12},t_{13}+t_{23}])=
 \rho([t_{13},t_{12}+t_{23}])=0,\\
 \rho([t_{12},t_{34}])=\rho([t_{13},t_{24}])=0,\\
 \rho([t_{23},t_{24}])=\rho([t_{23},t_{34}])=
\rho([t_{24},t_{34}])=L_{[A,B]}\partial_\varepsilon+L_{[A,C]}\partial_\varepsilon\partial_\eta.
\end{gathered}
\end{equation}
Thus the braid relations hold on strands $123$ and $234$,
as do the two displayed disjointness relations. The six
operators also satisfy
\[
 \sum_{1\leq i<j\leq4}\rho(t_{ij})=0.
\]
Commuting this identity with $\rho(t_{23})$ gives the remaining
disjointness relation. Commuting it with $\rho(t_{12})$ and
$\rho(t_{24})$, respectively $\rho(t_{13})$ and $\rho(t_{34})$,
gives the two independent braid relations on strands $124$,
respectively $134$. Hence all relations \eqref{eq:braid} hold.
\end{proof}

\subsection{Reconstruction and injectivity}
Identify $R\subset M=R\otimes\F_2[\varepsilon,\eta]/(\varepsilon^2,\eta^2)$ with the elements constant in
$\varepsilon,\eta$. 
The following result provides an explicit left inverse to the map $\Phi$ of Theorem \ref{thm:inj}.
\begin{prop}\label{prop:pentagon}
Let $\psi\in\grt(\F_2)_n$, $n\geq3$, and put
$P=\iota^{-1}\psi$ and $f=\Phi(\psi)=P(0,B,C)$. Then
\begin{equation}\label{eq:reconstruction}
 P(A,B,C)=f(E,L_C+T)(1)+f(E+L_B,T)(1),
\end{equation}
with $1\in R$ the unit element. 
In particular, $\Phi$ is injective.
\end{prop}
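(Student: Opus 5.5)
The idea is to apply the representation $\rho$ of Lemma~\ref{lem:t4-representation} to the pentagon identity \eqref{eq:pentagon-naive}, regarded as an identity in $U(\mathfrak t_4)$. The resulting operator identity on $M$ is then evaluated on the vector $1\in R\subset M$, and we extract one coefficient in $\varepsilon,\eta$. Since $\psi\in(y)$, each of the five terms is $P$ evaluated on three operators, which is a noncommutative polynomial with no constant term in those operators. The plan is to compute $\rho$ of the three arguments for each of the five terms. For example, the first term has arguments $\rho(t_{12})^2=\varepsilon^2=0$, $\rho(t_{23}+t_{24})$, and the commutator $[\varepsilon,\rho(t_{23}+t_{24})]$. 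Under $\rho$, some first arguments vanish, because $\varepsilon^2=0$ and $(\varepsilon+\eta+L_A(\partial_\varepsilon+\partial_\eta)+\varepsilon+L_A\partial_\varepsilon)^2=(\eta+L_A\partial_\eta)^2$, which we compute. Where a first argument vanishes, we are left with $f$ evaluated on the other two operators.

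\textbf{Term by term.} For each of the five terms we compute $X_k=\rho(a_k)^2$, $Y_k=\rho(b_k)$ and $Z_k=\rho([a_k,b_k])=[\rho(a_k),\rho(b_k)]$, using the commutator rules from the proof of Lemma~\ref{lem:t4-representation}. We then record how the monomials in $X_k,Y_k,Z_k$ act on $1$, modulo $\varepsilon,\eta$ and projected to a chosen coefficient such as the $\varepsilon\eta$- or constant component.

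\textbf{Expected outcome.} One term, most likely $P(t_{23}^2,t_{34},[t_{23},t_{34}])$ or $P(t_{12}^2,t_{23},\dots)$, should reproduce $P(A,B,C)$ in a suitable component. The natural candidate uses the identity $L_A\partial_\varepsilon\cdot L_A\partial_\varepsilon=0$; one may instead need the term whose operators act as $L_A$, $L_B$, $L_C$ on the appropriate component. Two other terms, whose first argument squares to zero, should contribute $f(E,L_C+T)(1)$ and $f(E+L_B,T)(1)$. Here the operators $E+G(\partial_\varepsilon+\partial_\eta)+\dots$ lose their $\partial$-parts after we project to the relevant coefficient. The remaining terms should vanish on $1$, either because every monomial contains a factor $\varepsilon^2$ or $\partial^2$, or because $P\in(B,C)$ has no pure-$A$ part and $E$, $T$ kill $1$.

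\textbf{Bookkeeping and injectivity.} The key bookkeeping tool is a grading by $\varepsilon,\eta$-degree. Every operator is a sum of pieces of $\varepsilon$-degree $+1,0,-1$ (and similarly for $\eta$). Starting from $1$, which has degree $(0,0)$, and reading off a fixed final degree, only words with a controlled pattern of raising and lowering operators survive. This should make each term tractable.

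Injectivity of $\Phi$ then follows immediately: if $f=0$, then \eqref{eq:reconstruction} gives $P=0$, hence $\psi=\iota(P)=0$ by Lemma~\ref{lem:abc}.

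\textbf{Main obstacle.} I expect the hard step to be showing that the term meant to produce $P(A,B,C)$ really acts as $L_A$, $L_B$, $L_C$ exactly on the chosen component, with the cross terms carrying $\partial$'s cancelling in characteristic $2$. A second difficulty is showing that the contributions of $E$ and $T$ in the $f$-terms land as stated, rather than mixing with the $L$'s.

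To handle this, I would first check that the chosen coefficient is stable under the operators involved, i.e.\ that they act triangularly with respect to $\varepsilon,\eta$-degree. I would then verify the claim on generators $A,B,C$ using Lemma~\ref{lem:t4-representation}'s commutator table, and extend multiplicatively. If this fails, I would test the formula on $\sigma_3=[B,C]+\dots$ in low weight to identify the correct component.
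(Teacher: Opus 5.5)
Your overall strategy (apply $\rho$ to \eqref{eq:pentagon-naive} and evaluate on $1\in R$) is the paper's. However, the computation that actually constitutes the proof is left undone, and the outcome you predict for it is wrong.

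No $\varepsilon,\eta$-degree bookkeeping is needed. The operators $\rho(u)$ that carry multiplication by $\varepsilon$ or $\eta$ enter only through $\rho(u)^2$ and $[\rho(u),\rho(v)]$. None of the resulting fifteen operators contains such a multiplication, so each preserves $R$ and one simply restricts to $R$. For the five terms, in the order of \eqref{eq:pentagon-naive}, the restrictions of $(\rho(u^2),\rho(v),\rho([u,v]))$ are
$(0,E,L_C+T)$, $(L_A,L_B,L_C)$, $(0,L_B,0)$, $(L_A,E+L_B,L_A+T)$ and $(0,0,L_A)$.

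Hence $L_{P(A,B,C)}$ comes from the term $(t_{13}+t_{23},t_{34})$, not from $(t_{23},t_{34})$ or $(t_{12},t_{23})$. Those two are exactly the terms whose first argument vanishes (as you note, $(L_A\partial_\varepsilon)^2=0$), so they cannot produce $A$ at all. They reduce to $P(0,L_B,0)$ and $P(0,0,L_A)$, which are Lie polynomials in one letter of weight $n\geq3$, and they vanish for that reason rather than the ones you list. Also, the operator you square is $\rho(t_{12}+t_{13}+t_{23})$. The relevant one is $\rho(t_{13}+t_{23})=\varepsilon+\eta+L_A\partial_\eta$, and its square is $[\eta,L_A\partial_\eta]=L_A$, not $0$.

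The genuinely missing idea concerns $f(E+L_B,T)(1)$. It does not come from a term whose first argument squares to zero. It comes from $(t_{12}+t_{13},t_{24}+t_{34})$, where $\rho(t_{12}+t_{13})^2=L_A\neq0$, so you face $P(L_A,E+L_B,L_A+T)$. To reduce this you need two commutation facts:
\begin{itemize}
\item $L_A$ commutes with $E+L_B$, because $[E,L_A]=L_{E(A)}=L_{[A,B]}=[L_B,L_A]$ in characteristic $2$;
\item $L_A$ commutes with $T$, because $T(A)=0$.
\end{itemize}
Then $L_A$ is central in the Lie algebra generated by the three arguments. Since $P$ has no length-one terms for $n\geq3$, every Lie monomial involving the first argument vanishes, which gives $f(E+L_B,L_A+T)$. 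Since every monomial of $f$ contains a bracket, the $L_A$ summand in the third argument drops as well, giving $f(E+L_B,T)$. Together with the first term, $P(0,E,L_C+T)=f(E,L_C+T)$, this yields \eqref{eq:reconstruction} after applying to $1$.

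With the term assignment you propose, the identity does not come out, and testing on $\sigma_3$ would not replace this argument. Your final injectivity deduction is fine.
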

\begin{proof}
Extend $\rho$ to $U(\mathfrak t_4)$ and apply it to
\eqref{eq:pentagon-naive}. For each of the five pairs $(u,v)$
appearing there, the operators $\rho(u^2),\rho(v),\rho([u,v])$
preserve $R$, with restrictions
\begin{equation}\label{eq:pentagon-restrictions}
\begin{array}{c|ccc}
 (u,v)&\rho(u^2)|_R&\rho(v)|_R&\rho([u,v])|_R\\\hline
 (t_{12},t_{23}+t_{24})&0&E&L_C+T\\
 (t_{13}+t_{23},t_{34})&L_A&L_B&L_C\\
 (t_{23},t_{34})&0&L_B&0\\
 (t_{12}+t_{13},t_{24}+t_{34})&L_A&E+L_B&L_A+T\\
 (t_{12},t_{23})&0&0&L_A.
\end{array}
\end{equation}
These follow directly from \eqref{eq:t4-representation}; for
the squared arguments, use
\[
 \rho(t_{12})^2=\rho(t_{23})^2=0,\qquad
 \rho(t_{13}+t_{23})^2=\rho(t_{12}+t_{13})^2=L_A.
\]
The third and fifth rows contribute zero, since $n\geq3$.
In the fourth row, all terms of $P$ containing $A$ vanish,
since $P$ has no length-one terms and
\[
 [L_A,E+L_B]=[L_A,L_A+T]=0.
\]
Thus the pentagon restricted to $R$ becomes
\begin{equation}\label{eq:full-operator-pentagon}
\begin{aligned}
 L_{P(A,B,C)}&=f(E,L_C+T)+f(E+L_B,L_A+T)\\
 &=f(E,L_C+T)+f(E+L_B,T).
\end{aligned}
\end{equation}
For the second equality we used that $[L_A,E+L_B]=[L_A,T]=0$ and that all terms in $f$ have at least one bracket.
Applying \eqref{eq:full-operator-pentagon} to $1$ proves
\eqref{eq:reconstruction}. 
It determines $P$, hence $\psi$,
from $f$, proving injectivity.
\end{proof}

\subsection{The image of \texorpdfstring{$\Phi$}{Phi} and proof of Theorem \ref{thm:inj}}\label{sec:Phi image}
Keep $\psi,P,f$ as in Proposition~\ref{prop:pentagon}.
To prove Theorem \ref{thm:inj} it remains to show that $f(B,C)\in \Lie(C,[B,C])$. 
On the algebra $\F_2\langle B,C\rangle$ define a derivation $\partial$ by
\begin{equation}\label{eq:partial}
 \partial(B)=C,\qquad \partial(C)=0.
\end{equation}
Define an algebra morphism 
\begin{gather*}
    \theta \colon R=\F_2\langle A,B,C\rangle \longrightarrow 
    \operatorname{End}_{\F_2}\bigl( \F_2\langle B,C\rangle \bigr) 
    \\
     \theta(A)=L_{B^2},\qquad
 \theta(B)=\partial,\qquad
 \theta(C)=L_C,
\end{gather*}
with $L_C$ again denoting left multiplication by $C$.
We next want to apply $\theta$ to the reconstruction formula \eqref{eq:reconstruction}. To this end, we need to understand how $\theta$ intertwines with the derivations $E,T$ of \eqref{eq:operator-derivations} appearing there.
First note that $\theta(T(A))=\theta(T(B))=\theta(T(C))=0$, and hence $\theta(T(r))=0$ for any $r\in R$.
Next, define on $\F_2\langle B,C\rangle$ the derivation $d$ by
\[
 d(B)=B^2+C,\quad d(C)=[B,C].
\]
Then by checking on the generators $A,B,C$ one sees that for any $r\in R$
\[
\theta(E(r))=[d,\theta(r)].
\]
Hence the linear map $Q:R\to \F_2\langle B,C\rangle$, $Q(r)=\theta(r)(1)$,
satisfies $QE=dQ$ and $QT=0$. Moreover, for $r,s\in R$,
\[
 Q(rs)=\theta(r)\theta(s)(1)=\theta(r)Q(s),
 \qquad\text{so}\qquad QL_r=\theta(r)Q.
\]
Now apply $Q$ to \eqref{eq:reconstruction}. Using the intertwining formulas above we obtain the following equations in $\F_2\langle B,C\rangle$,
\[
\begin{aligned}
 Q (f(E,L_C+T)(1))&=f(d,L_C)( Q(1))= f(d,L_C)(1),\\
 Q(f(E+L_B,T)(1))&=f(d+\partial,0)(Q(1))=0.
\end{aligned}
\]
Hence from \eqref{eq:reconstruction} we obtain
\begin{equation}\label{eq:QP1}
 Q(P)=f(d,L_C)(1).
\end{equation}
Use the symmetry equation \eqref{eq:symmetry-P} with the substitutions $x=\partial$ and $y=L_B$ to obtain, with $\partial^2=0$ and $[\partial,L_B]=L_C$,
\begin{equation}\label{eq:thetaP}
 \theta(P)=P(L_{B^2},\partial,L_C)
          =P(0,L_B,L_C)=L_f.
\end{equation}
Applying this to $1\in \F_2\langle B,C\rangle$ yields $Q(P)=f$, and hence with \eqref{eq:QP1}
\begin{equation}\label{eq:image-fixed-point}
 f=f(d,L_C)(1).
\end{equation}
Now one checks that $d^2=0$ so that also $\ad_d^2=0$. Hence the ideal generated by $L_C$ in
$\Lie(d,L_C)$ is the Lie subalgebra generated by
$L_C$ and $[d,L_C]=L_{[B,C]}$.
Evaluation at $1$ maps this Lie subalgebra into
$\Lie(C,[B,C])$. Thus $f\in \Lie(C,[B,C])$ as desired, and Theorem \ref{thm:inj} follows. \hfill\qed

\section{The Ihara bracket and free generators}
Note that Theorem \ref{thm:inj} shown above already implies Theorem \ref{thm:main 1} via Lemma \ref{lem:target}, and hence also the Deligne--Drinfeld conjecture.
In this section we shall go further and study the compatibility of the map $\Phi$ with the Ihara bracket \eqref{eq:ihara}, to show Theorem \ref{thm:main 2}. The results of this section are not needed for the proof of the Deligne--Drinfeld conjecture.

\subsection{Closedness under the Ihara bracket}
It is well known that $\grt(\Q)$ is closed under the Ihara bracket. The proof given in \cite{BarNatan} can be adapted to finite characteristic as well. However, we did not find a direct citable reference in the literature, unfortunately. So for completeness, we provide a proof for the case of characteristic $2$ relevant to this paper. In fact, in characteristic 2 the following slightly stronger result is true.

\begin{prop}\label{prop:closure}
    Let $\psi,\varphi\in
    \F_2\cdot[x,y]\oplus\grt(\F_2)$. Then
    \[
    \{\psi,\varphi\}\in\grt(\F_2),
    \]
    where $\{-,-\}$ denotes the Ihara bracket \eqref{eq:ihara} and $\F_2\cdot[x,y]$ the 1-dimensional span of $[x,y]\in \Lie(x,y)$.
\end{prop}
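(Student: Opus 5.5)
We plan to follow the standard group-theoretic argument, as in \cite{BarNatan}, in its infinitesimal form. Each defining equation \eqref{eq:antisymmetry}--\eqref{eq:pentagon} says that a certain linear map $\psi\mapsto \Lambda(\psi)$ vanishes. Here $\Lambda$ is built from substitution homomorphisms $\Lie(x,y)\to\mathfrak g$, with $\mathfrak g=\Lie(x,y)$ or $\mathfrak g=\mathfrak t_4$. To each $\psi$ we attach the derivation $D_\psi$ of $\Lie(x,y)$ given by $x\mapsto 0$, $y\mapsto[y,\psi]$. We use the standard identity $[D_\psi,D_\varphi]=D_{\{\psi,\varphi\}}$ (with the Ihara bracket); it is a formal computation valid over $\Z$, hence over $\F_2$. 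It is also convenient to pass to the "symmetric" derivation $\tilde D_\psi$: $x\mapsto[\psi',x]$, $y\mapsto[y,\psi]$, with $x+y\mapsto 0$-type normalisations. These differ from $D_\psi$ by inner derivations, and they are exactly the infinitesimal automorphisms of the relevant Lie algebras (of $\mathfrak t_3,\mathfrak t_4$) when $\psi$ satisfies the hexagon, antisymmetry and pentagon equations.

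The key steps are as follows.

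First, for $\psi\in\grt(\F_2)$, we construct a derivation $\delta^{(4)}_\psi$ of $\mathfrak t_4$ by sending $t_{12}\mapsto 0$ and $t_{23}\mapsto[t_{23},\psi(t_{12},t_{23})]$, extended to the other generators in the usual way. Using \eqref{eq:pentagon} and the braid relations \eqref{eq:braid}, we check that it is well defined and compatible with the coface maps $\mathfrak t_3\to\mathfrak t_4$, i.e.\ with the substitutions $(x,y)\mapsto(u,v)$ appearing in the pentagon. We treat $[x,y]$ as well: for $\psi=[x,y]$ the corresponding $\delta$ is, up to an inner derivation, given by bracketing with a central element of the form $\sum t_{ij}$ in the appropriate $\mathfrak t_3$. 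So it commutes with all coface substitutions up to inner terms. This is what makes the extension to $\F_2[x,y]\oplus\grt(\F_2)$ possible; note $[x,y]$ itself satisfies \eqref{eq:antisymmetry} only because we are in characteristic $2$.

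Second, we compute
\[
\Lambda(\{\psi,\varphi\})=\delta_\psi\Lambda(\varphi)-\delta_\varphi\Lambda(\psi)+(\text{terms linear in }\Lambda(\psi),\Lambda(\varphi)),
\]
where we arrange the identity so that every term vanishes when $\Lambda(\psi)=\Lambda(\varphi)=0$. We do this separately for the antisymmetry, the hexagon \eqref{eq:three-cycle}, the cabling \eqref{eq:cabling} and the pentagon \eqref{eq:pentagon}.

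For antisymmetry and the hexagon, the substitutions $(x,y)\mapsto(y,x)$ and $(x,y)\mapsto(y,-x-y)$ generate an $S_3$-action. The check that $\{\psi,\varphi\}$ is again $S_3$-antiinvariant is a direct computation using $\ad$-invariance of $x+y$ in the relevant sense. For $[x,y]$ these equations hold mod $2$, since $[x,y]+[y,x]=0$ and $3[x,y]=[x,y]$ in the hexagon sum is replaced by the correct count. Here we must recheck: the hexagon gives $3[x,y]=[x,y]\neq0$. So $[x,y]$ fails the hexagon, and the proof must show that the failure term is killed in the bracket. That is, $\Lambda([x,y])$ is a constant (central/inner) term whose contribution to $\Lambda(\{[x,y],\varphi\})$ is $\delta_\varphi$ applied to that constant, which we verify vanishes.

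\textbf{Main obstacle.} We expect the hardest part to be exactly this bookkeeping for $\psi=[x,y]$. It satisfies some but not all defining equations: it fails the hexagon, and it fails the pentagon in weight $2$ as in Example~\ref{ex:low}. We will therefore have to show that $\Lambda([x,y])$ always lies in the kernel of $\delta_\varphi$ for $\varphi\in\grt(\F_2)$. We would try to prove this by showing that $\Lambda([x,y])$ is a multiple of central elements such as $\sum_{i<j}t_{ij}$ or $x+y$-type elements, which every $\delta_\varphi$ annihilates by construction. The pentagon verification for general $\psi,\varphi$ is standard but long, and we would reduce it to the derivation property of the $\delta^{(4)}$ and their compatibility with cofaces.
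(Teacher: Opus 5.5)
\textbf{Verdict: there is a genuine gap, in the handling of $[x,y]$.}

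Your architecture for the pentagon part is essentially the paper's. You build a derivation $\delta_\psi$ of $\mathfrak t_4$ with $t_{12}\mapsto 0$ and $t_{23}\mapsto[t_{23},\psi(t_{12},t_{23})]$. It is compatible with the five substitutions up to inner terms, which gives $L(\theta)=\delta_\psi L(\varphi)+\delta_\varphi L(\psi)+[L(\psi),L(\varphi)]$, and likewise for $R$.

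The gap is in the step you flag as the main obstacle, the element $[x,y]$, where the bookkeeping rests on wrong premises.
\begin{enumerate}
\item You misidentify which equations $[x,y]$ violates. It satisfies antisymmetry in every characteristic. It also satisfies the pentagon exactly: its defect is $[t_{12},t_{34}]+[t_{13},t_{24}]=0$. (Example~\ref{ex:low} uses the pentagon only in weight $1$; weight $2$ is killed by the three-cycle.) What $[x,y]$ violates over $\F_2$ is the three-cycle equation, with defect $[x,y]$, and the cabling equation, which you do not mention. The cabling defect is $C_{[x,y]}(x,y)=[x,[x,y]]+[x+y,[x+y,y]]=[y,[y,x]]$.
\item Your cure is to show these defects are central, hence annihilated by every $\delta_\varphi$. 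This cannot work. Both $[x,y]$ and $[y,[y,x]]$ are nonzero of weight $\geq 2$. But $\Lie(x,y)$ has zero center, and the center of $\mathfrak t_3=\Lie(t_{12},t_{23})\oplus\F_2(t_{12}+t_{13}+t_{23})$ lies in weight $1$. Nor are they killed by $D_\varphi$: for instance $D_\varphi[x,y]=[x,[y,\varphi]]\neq 0$ for $\varphi=\sigma_3$.
\item Likewise $\delta_{[x,y]}$ is not inner. After projecting to $\mathfrak t_3$, a weight-two element commuting with $t_{12}$ is zero, so no inner derivation produces $[t_{23},[t_{12},t_{23}]]$.
\end{enumerate}

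What is needed are genuinely characteristic-two identities.

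\emph{Well-definedness of $\delta_\rho$.} The critical relation is $[t_{13},t_{24}]=0$. Using the pentagon, its defect is $[t_{24},C_\rho(t_{12},t_{23})]+[t_{13},C_\rho(t_{34},t_{23})]$. So for $\rho\in\grt(\F_2)$ you need cabling, not only the pentagon and braid relations. For $\rho=[x,y]$ the defect equals $\ad_{t_{23}}^2[t_{13},t_{24}]=0$, because $\ad_u^2$ is a derivation in characteristic two.

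\emph{Cabling of $\theta=\{\psi,\varphi\}$.} The failure of $[x,y]$ is absorbed by an inner correction in the enveloping algebra, not by annihilation. Let $\tau\colon x\mapsto x+y,\ y\mapsto y$, and let $\lambda_\rho$ be the coefficient of $[x,y]$ in $\rho$. Then $\tau D_\rho\tau=D_\rho+\ad_{a_\rho}$ with $a_\rho=(1+\tau)\rho+\lambda_\rho y^2$. Hence $\tau D_\theta\tau=D_\theta+\ad_{\theta+\tau\theta}$, and evaluating at $x+y$ gives $C_\theta=0$. Note that commuting $\ad_{y^2}=\ad_y^2$ with $D_\varphi$ produces another inner term $\ad_{[y^2,\varphi]}$, not zero.

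\emph{Three-cycle equation.} Do not attack it directly; your $S_3$/special-derivation picture presupposes it for each factor. Symmetry of $\theta$ follows from the pentagon via the substitution $t_{12}=t_{34}=x$, $t_{23}=t_{14}=y$, $t_{13}=t_{24}=x+y$. Cabling and symmetry then show that the three-cycle sum commutes with both $x$ and $y$, so it vanishes.
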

\begin{proof}
For a Lie polynomial $\rho\in \Lie(x,y)$, put
\[
C_\rho(x,y)=[x,\rho(x,y)]+[x+y,\rho(x+y,y)],
\]
so that the cabling equation \eqref{eq:cabling} for $\rho$ becomes $C_\rho(x,y)=0$. 
In the following it will also be convenient to use the usual notation 
\[
\rho_{I,J,K} := \rho\bigl(\sum_{i\in I, j\in J} t_{ij}, \sum_{j\in J, k\in K} t_{jk}\bigr)
\]
for disjoint $I,J,K\subset \{1,2,3,4\}$, for example 
\begin{align*}
    \rho_{1,2,3}&:= \rho(t_{12},t_{23}), &
    \rho_{1,23,4}&:= \rho(t_{12}+t_{13},t_{24}+t_{34}).
\end{align*}
For a solution $\rho\in \Lie(x,y)$ of the pentagon \eqref{eq:pentagon} and symmetry \eqref{eq:antisymmetry} equations of weight at least two,
consider the assignment
\begin{equation}\label{eq:delta as}
\begin{aligned}
\delta_\rho(t_{12})&=0,\\
\delta_\rho(t_{23})=\delta_\rho(t_{13})
 &=[t_{23},\rho(t_{12},t_{23})]=[t_{23},\rho_{1,2,3}],\\
\delta_\rho(t_{34})
 &=[t_{34},\rho(t_{13}+t_{23},t_{34})]=[t_{34},\rho_{12,3,4}],\\
\delta_\rho(t_{24})
 &=[t_{24}+t_{34},\rho(t_{12},t_{23}) 
 +\rho(t_{12}+t_{13},t_{24}+t_{34})]
 +[t_{34},\rho(t_{13}+t_{23},t_{34})]
 \\&=[t_{24}+t_{34},\rho_{1,2,3} +\rho_{1,23,4}]
 +
 [t_{34},\rho_{12,3,4}],
\end{aligned}
\end{equation}
with $\delta_\rho(\sum_{i<j}t_{ij})=0$ determining its value
on $t_{14}$.
Modulo the central element $\sum_{i<j}t_{ij}$, the presentation
\eqref{eq:braid} is equivalent to the six relations
\[
\begin{gathered}
\relax[t_{12},t_{34}]=[t_{12},t_{13}+t_{23}]
 =[t_{23},t_{12}+t_{13}]=0,\\
[t_{23},t_{24}+t_{34}]=[t_{34},t_{23}+t_{24}]
 =[t_{13},t_{24}]=0.
\end{gathered}
\]
The assignment \eqref{eq:delta as} preserves the first five relations. For the last one, the pentagon, also with strands $2,3$
exchanged, yields
\begin{equation}\label{eq:closure-relation-error}
\begin{aligned}
\delta_\rho[t_{13},t_{24}]
 &=[t_{24},[t_{23},\rho_{1,2,3}]]
 +[t_{13},[t_{24}+t_{34},\rho_{1,2,3}+\rho_{1,23,4}]]
 +[t_{13},[t_{34},\rho_{12,3,4}]]
 \\&=[t_{24},[t_{23},\rho_{1,2,3}]]
 +[t_{13},[t_{24},\rho_{1,2,3}+\rho_{1,23,4}]]
 +[t_{13},[t_{34},\rho_{1,2,34}+\rho_{2,3,4}]]
 \\&=[t_{24},[t_{12},\rho_{1,2,3}]]
 +[t_{13},[t_{24},\rho_{1,23,4}]]
 +[t_{13},[t_{34},\rho_{2,3,4}]]
  \\&=[t_{24},[t_{12},\rho_{1,2,3}]]
 +[t_{13},[t_{24},\rho_{1,3,2}+\rho_{1,3,24}+\rho_{13,2,4}+\rho_{3,2,4}]]
 +[t_{13},[t_{34},\rho_{2,3,4}]]
   \\&=[t_{24},[t_{12},\rho_{1,2,3}]]
 +[t_{13},[t_{24},\rho_{1,3,2}+\rho_{3,2,4}]]
 +[t_{13},[t_{34},\rho_{2,3,4}]]
   \\&=[t_{24},[t_{12},\rho_{1,2,3}] +[t_{13},\rho_{1,3,2}] ]
 +[t_{13},[t_{34},\rho_{2,3,4}]+ [t_{24},\rho_{3,2,4}]]
 \\&=[t_{24},C_\rho(t_{12},t_{23})]
  +[t_{13},C_\rho(t_{34},t_{23})].
\end{aligned}
\end{equation}
In particular, $\delta_\rho$ is a derivation if
$\rho\in\grt(\F_2)$.
This also holds for $\rho=[x,y]$: its pentagon defect is
$[t_{12},t_{34}]+[t_{13},t_{24}]=0$, and
\[
\begin{aligned}
C_{[x,y]}(t_{12},t_{23})
 &=\operatorname{ad}_{t_{23}}^2(t_{13}),
 &
C_{[x,y]}(t_{34},t_{23})
 &=\operatorname{ad}_{t_{23}}^2(t_{24}).
\end{aligned}
\]
Hence \eqref{eq:closure-relation-error} vanishes, since
$\operatorname{ad}_{t_{23}}^2$ is a derivation in characteristic two
and $[t_{13},t_{24}]=0$.
By linearity, $\delta_\psi$ and $\delta_\varphi$ are therefore
derivations of $\mathfrak t_4$.

Denote the two sides of
the pentagon \eqref{eq:pentagon} by
\[
\begin{aligned}
L(\rho)&=\rho_{1,2,34}
 +\rho_{12,3,4}, &
R(\rho)&=\rho_{1,2,3}
 +\rho_{1,23,4}
 +\rho_{2,3,4}.
\end{aligned}
\]
Then for $\theta:= \{\psi,\varphi\}=[\psi,\varphi]+D_\psi\varphi+D_\varphi \psi$ we want to show that $L(\theta)=R(\theta)$, where $D_{\psi}$ is the Ihara derivation as in \eqref{eq:ihara}. We first compute the various derivation terms appearing in the pentagon equation.
The definition of $\delta_\psi$ immediately gives
\[
(D_\psi\varphi)_{1,2,3}
 =\delta_\psi(\varphi_{1,2,3}),
\qquad
(D_\psi\varphi)_{12,3,4}
 =\delta_\psi(\varphi_{12,3,4}).
\]
Moreover, the derivation $\delta_\psi+\ad_{\psi_{1,2,3}}$ of $\mathfrak t_4$ sends
$t_{12}+t_{13}$ to zero and $t_{24}+t_{34}$ to
$[t_{24}+t_{34},\psi_{1,23,4}]$. Hence
\[
(D_\psi\varphi)_{1,23,4}
 =\delta_\psi(\varphi_{1,23,4})
  +[\psi_{1,2,3},\varphi_{1,23,4}].
\]

Next, $\delta_\psi+\ad_{\psi_{12,3,4}}$ annihilates $t_{12}$,
since $t_{12}$ commutes with both $t_{13}+t_{23}$ and $t_{34}$.
Furthermore,
\[
\begin{aligned}
&(\delta_\psi+\ad_{\psi_{12,3,4}})(t_{23}+t_{24})\\
&\quad=[t_{23}+t_{24}+t_{34},
            \psi_{1,2,3}+\psi_{12,3,4}]
       +[t_{24}+t_{34},\psi_{1,23,4}]\\
&\quad= [t_{23}+t_{24}+t_{34},\psi_{1,2,34}+\psi_{2,3,4}+ \psi_{1,23,4}]
+[t_{24}+t_{34},\psi_{1,23,4}]\\
&\quad=[t_{23}+t_{24},\psi_{1,2,34}]
       +[t_{34},\psi_{1,2,34}]
       +[t_{23}+t_{24}+t_{34},\psi_{2,3,4}]
       +[t_{23},\psi_{1,23,4}]\\
&\quad=[t_{23}+t_{24},\psi_{1,2,34}].
\end{aligned}
\]
Here the second equality uses the pentagon, and the final three
terms vanish by the relations on $\mathfrak t_4$. Thus
\[
(D_\psi\varphi)_{1,2,34}
 =\delta_\psi(\varphi_{1,2,34})
  +[\psi_{12,3,4},\varphi_{1,2,34}].
\]

Finally, $\delta_\psi+
\ad_{\psi_{1,2,3}+\psi_{1,23,4}}$ sends $t_{23}$ to
\[
[t_{23},\psi_{1,23,4}]=0
\]
and sends $t_{34}$ to
\[
\begin{aligned}
&[t_{34},\psi_{12,3,4}+\psi_{1,2,3}+\psi_{1,23,4}]=[t_{34},\psi_{1,2,34}+\psi_{2,3,4}]
 =[t_{34},\psi_{2,3,4}],
\end{aligned}
\]
using the pentagon and $[t_{34},\psi_{1,2,34}]=0$. Consequently,
\[
(D_\psi\varphi)_{2,3,4}
 =\delta_\psi(\varphi_{2,3,4})
  +[\psi_{1,2,3}+\psi_{1,23,4},\varphi_{2,3,4}].
\]
The same identities hold with $\psi$ and $\varphi$ interchanged. Summing the two evaluations occurring in $L$ now gives
\[
\begin{aligned}
L(\theta)
={}&\delta_\psi L(\varphi)+\delta_\varphi L(\psi)\\
 &+[\psi_{1,2,34},\varphi_{1,2,34}]
   +[\psi_{12,3,4},\varphi_{12,3,4}]\\
 &+[\psi_{12,3,4},\varphi_{1,2,34}]
   +[\varphi_{12,3,4},\psi_{1,2,34}]\\
={}&\delta_\psi L(\varphi)+\delta_\varphi L(\psi)
   +[L(\psi),L(\varphi)].
\end{aligned}
\]
Likewise, summing the three evaluations occurring in $R$ gives
\[
R(\theta)
 =\delta_\psi R(\varphi)+\delta_\varphi R(\psi)
   +[R(\psi),R(\varphi)].
\]
Since $L(\psi)=R(\psi)$ and $L(\varphi)=R(\varphi)$,
we conclude that $L(\theta)=R(\theta)$.
This also implies symmetry: substituting
\[
t_{12}=t_{34}=x,\qquad t_{23}=t_{14}=y,\qquad
t_{13}=t_{24}=x+y
\]
into the pentagon equation yields $\theta(x,y)+\theta(y,x)=0$.

It remains to prove the cabling relation $C_\theta=0$. 
Temporarily work in $\F_2\langle x,y\rangle$ and introduce the involution $\tau: x\mapsto (x+y), y\mapsto y$. Then cabling reads $C_\rho = (1+\tau) [x,\rho(x,y)]=0$.
For $\rho \in \F_2\cdot[x,y]\oplus \grt(\F_2)$ let $\lambda_\rho$ be the coefficient of $[x,y]$. 
Let 
\[
a_\rho = (1+\tau)\rho + \lambda_\rho y^2.
\]
Then using $C_\rho=\lambda_\rho [y^2,x]$ we may show that 
\[
\tau D_\rho \tau = D_\rho + \ad_{a_\rho}
\]
by checking that the derivations on both sides agree on $x$ and $y$.
We also compute by expanding both sides
\begin{align*}
\theta+\tau\theta
=D_\psi a_\varphi+D_\varphi a_\psi+[a_\psi,a_\varphi].
\end{align*}

Furthermore, we know that $D_\theta=[D_\psi, D_\varphi]$ so that by the previous two equations
\begin{align*}
\tau D_\theta\tau &= \tau [D_\psi, D_\varphi] \tau 
=
[\tau D_\psi\tau , \tau D_\varphi \tau] =
[D_\psi + \ad_{a_\psi},D_\varphi + \ad_{a_\varphi}]
\\&=
D_\theta + \ad_{D_\psi a_\varphi + D_\varphi a_\psi + [a_\psi,a_\varphi]}
=
D_\theta + \ad_{\theta +\tau \theta}.
\end{align*}
Evaluating this at $x+y$ yields the desired cabling relation:
\[
0 = \tau D_\theta\tau(x+y) = D_\theta (x+y) + [x+y,\theta +\tau \theta]
=
[y,\theta(x,y)] + [x+y,\theta(x,y) + \theta(x+y,y)] =C_\theta.  
\]

Finally, symmetry and cabling imply the three-cycle relation
in weights at least two: Cabling applied to $(x,y)$ and
$(x,x+y)$ shows that
\[
\theta(x,y)+\theta(y,x+y)+\theta(x+y,x)
\]
commutes with $x$, and symmetry shows that it also commutes with $y$. It therefore vanishes, since $\Lie(x,y)$ has zero
center. Thus $\theta\in\grt(\F_2)$.
\end{proof}

\subsection{An extension of \texorpdfstring{$\Phi$}{Phi} and compatibility}
By Proposition \ref{prop:closure} we may define the one-dimensional extension of $\grt(\F_2)$,
\[
\grt^{ex}(\F_2):=\F_2\cdot [x,y]\ltimes \grt(\F_2) \subset \Lie(x,y),
\]
equipped with the Ihara bracket. Our morphism $\Phi$ extends to a morphism
\begin{gather*}
\Phi^{ex} \colon\grt^{ex}(\F_2) \to \Lie(B,C) \\
\Phi^{ex}(\psi) = (\iota^{-1}\psi)(0,B,C),
\end{gather*}
so that $\Phi^{ex}\mid_{\grt(\F_2)}=\Phi$.
We have $\Phi^{ex}([x,y])=C$, so Theorem \ref{thm:inj} implies that this extended version of $\Phi$ is still injective and takes values in $\Lie(C,[B,C])$.

\begin{lemma}\label{lem:PhiLie}
    The map $\Phi^{ex}\colon \grt^{ex}(\F_2) \to \Lie(B,C)$ is a morphism of weight graded Lie algebras, i.e., for any $\psi,\varphi\in\grt^{ex}(\F_2)$ we have 
    \[
    \Phi^{ex}(\{\psi, \varphi\})= [\Phi^{ex}(\psi),\Phi^{ex}(\varphi)].
    \]
\end{lemma}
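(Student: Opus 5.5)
The plan is to transport the Ihara derivation $D_\psi$ through the embedding $\iota$ of Lemma~\ref{lem:abc} to a derivation of $\F_2\langle A,B,C\rangle$. Then observe that it descends along the evaluation $A\mapsto 0$, and that on the quotient $\Lie(B,C)$ it becomes (up to sign, irrelevant in characteristic two) the inner derivation $\ad_f$, where $f=\Phi^{ex}(\psi)$. Since $\iota$ is an algebra map, $\iota^{-1}[\psi,\varphi]=[\iota^{-1}\psi,\iota^{-1}\varphi]$. The evaluation $\mathrm{ev}\colon P\mapsto P(0,B,C)$ is an algebra morphism $\F_2\langle A,B,C\rangle\to\F_2\langle B,C\rangle$, namely the quotient by the ideal generated by $A$. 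So the commutator term $[\psi,\varphi]$ maps to $[f,g]$ with $g=\Phi^{ex}(\varphi)$, and the work lies in the terms $D_\psi\varphi$ and $D_\varphi\psi$.

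\emph{Step 1: $\ad_x$ in $ABC$ coordinates.} Let $\delta$ be the associative derivation of $\F_2\langle A,B,C\rangle$ with $\delta(A)=0$, $\delta(B)=C$, $\delta(C)=[A,B]$. I check $\iota\circ\delta=\ad_x\circ\iota$ on generators, using $[x,x^2]=0$ and $[x,[x,y]]=[x^2,y]$ in characteristic two. Both sides are derivations along the algebra map $\iota$, so they agree everywhere.

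\emph{Step 2: $D_\psi$ in $ABC$ coordinates.} Extend $D_\psi$ to an associative derivation of $\F_2\langle x,y\rangle$. Then $D_\psi(x^2)=0$, $D_\psi(y)=[y,\psi]$ and $D_\psi([x,y])=[x,[y,\psi]]$. With $P=\iota^{-1}\psi$, define the derivation $D'_P$ of $\F_2\langle A,B,C\rangle$ by $A\mapsto 0$, $B\mapsto[B,P]$ and
\[
C\mapsto [C,P]+[B,\delta P].
\]
By Step~1, $\iota D'_P=D_\psi\iota$ on generators, and hence everywhere. Consequently $\iota^{-1}(D_\psi\varphi)=D'_P(\iota^{-1}\varphi)$.

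\emph{Step 3: descent to $\Lie(B,C)$.} Since $D'_P(A)=0$ and $\delta(A)=0$, both derivations preserve the ideal generated by $A$ and descend along $\mathrm{ev}$. The map $\delta$ descends to the derivation $\partial$ of \eqref{eq:partial}, because $[A,B]\mapsto 0$, so $\mathrm{ev}(\delta P)=\partial f$. Thus $D'_P$ descends to the derivation $B\mapsto[B,f]$, $C\mapsto[C,f]+[B,\partial f]$.

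By Theorem~\ref{thm:inj}, extended to $\Phi^{ex}$ using $\Phi^{ex}([x,y])=C$, we have $f\in\Lie(C,[B,C])$. Since $\partial C=0$ and $\partial[B,C]=[C,C]=0$, the derivation $\partial$ kills $f$. Hence the descended derivation is $u\mapsto[u,f]$, and therefore $\Phi^{ex}(D_\psi\varphi)=[g,f]$. Symmetrically, $\Phi^{ex}(D_\varphi\psi)=[f,g]$. Summing gives
\[
\Phi^{ex}(\{\psi,\varphi\})=[f,g]+[g,f]+[f,g]=[f,g]
\]
in characteristic two. Weight preservation is already known.

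The step I expect to need the most care is Step~2. The check must be done at the level of associative derivations, and it has to produce a genuine formula in the $A,B,C$ variables for $\iota^{-1}[x,[y,\psi]]$; this is exactly what the derivation $\delta$ from Step~1 provides. The decisive input is then that the image of $\Phi^{ex}$ lies in $\Lie(C,[B,C])$. This is what kills the correction term $[B,\partial f]$, and without it $D_\psi$ would not reduce to an inner derivation.
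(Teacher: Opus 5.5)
Your proof is correct, but it implements the projection differently from the paper. The paper never writes $D_\psi$ in $A,B,C$ coordinates. It uses the algebra map $\chi\colon\F_2\langle x,y\rangle\to\operatorname{End}_{\F_2}(\F_2\langle B,C\rangle)$ with $\chi(x)=\partial$ and $\chi(y)=L_B$. Because $\partial^2=0$ and $[\partial,L_B]=L_C$, this map satisfies $\chi(\iota(r))=L_{r(0,B,C)}$, and in particular $\chi(\psi)=L_f$. Since $\chi$ is defined on all of $\F_2\langle x,y\rangle$, including on $x$, the intertwining $\chi(D_\psi r)=[L_f,\chi(r)]$ can be checked directly on $r=x,y$. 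The check on $x$, namely $0=[L_f,\partial]$, is exactly where $\partial f=0$ enters.

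You instead work with $r\mapsto(\iota^{-1}r)(0,B,C)$ on the image of $\iota$, which does not contain $x$. That is why you need the auxiliary derivation $\delta$, the $A,B,C$ form of $\ad_x$, to lift $D_\psi$ to $D'_P$, followed by the descent along $A\mapsto 0$. The two arguments are closely parallel: your descended $\delta$ is $\partial=\chi(x)$, and your correction term $[B,\partial f]$ is the counterpart of the paper's check on $x$.

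The paper's route is shorter and reuses the operator substitution of \eqref{eq:thetaP}. Yours stays inside the algebras and produces an explicit formula for the Ihara derivation in $A,B,C$ coordinates before setting $A=0$. It also shows that, after this substitution, the only deviation of $D_\psi$ from the inner derivation $u\mapsto[u,f]$ is the term $[B,\partial f]$.

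One small bookkeeping remark: $D_\psi\varphi$ need not lie in $\grt^{ex}(\F_2)$. So $\Phi^{ex}(D_\psi\varphi)$ should be read as $(\iota^{-1}(D_\psi\varphi))(0,B,C)$, a linear map on the image of $\iota$; this linearity is what justifies splitting the Ihara bracket termwise.
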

\begin{proof}
Define an algebra morphism
\[
\chi\colon\F_2\langle x,y\rangle\longrightarrow
\operatorname{End}_{\F_2}\bigl(\F_2\langle B,C\rangle\bigr),
\qquad \chi(x)=\partial,\quad\chi(y)=L_B,
\]
with $\partial$ the derivation \eqref{eq:partial}.
For $\psi=\iota(P)\in\grt^{ex}(\F_2)$ and $f=\Phi^{ex}(\psi)$, the identities
$\partial^2=0$ and $[\partial,L_B]=L_C$ then imply
\[
\chi(\psi)=
\psi(\partial, L_B)
=
P(\partial^2, L_B, L_C)
=
P(0, L_B, L_C)
=
L_f,
\]
see also  \eqref{eq:thetaP}.
Since $\partial$ annihilates $C$ and $[B,C]$,
Theorem~\ref{thm:inj} gives $\partial(f)=0$. Thus
\begin{equation}\label{eq:chiD agree}
\chi (D_\psi(x))=0=[L_f,\partial],\qquad
\chi(D_\psi(y))=\chi([y,\psi(x,y)])=[L_B,L_f].
\end{equation}
We obtain that for any $r\in\F_2\langle x,y\rangle$
\[
\chi (D_\psi(r))=[L_f,\chi(r)]
\]
since both sides satisfy the Leibniz rule relative to $\chi$ and agree on $r=x,y$ by \eqref{eq:chiD agree}.
For $\varphi\in\grt^{ex}(\F_2)$ and $g=\Phi^{ex}(\varphi)$, applying $\chi$ to
the Ihara bracket yields
\begin{equation*}
L_{\Phi^{ex}(\{\psi,\varphi\})}
= 
\chi(\{\psi,\varphi\})
=
\chi([\psi,\varphi] +D_\psi\varphi + D_\varphi(\psi) )
=[L_f,L_g]+[L_f,L_g]+[L_g,L_f]=L_{[f,g]}.
\end{equation*}
Hence $\Phi^{ex}(\{\psi,\varphi\})=[f,g]$, so that $\Phi^{ex}$ is indeed a morphism of Lie algebras.
\end{proof}

\subsection{Proof of Theorem \ref{thm:main 2}}
From Lemma \ref{lem:PhiLie} we immediately obtain:
\begin{cor}\label{cor:main 3}
    The morphism 
    \[
    \Phi^{ex} \colon \grt^{ex}(\F_2) \to \Lie(C,[B,C])
    \]
    is an isomorphism of weight graded Lie algebras.
\end{cor}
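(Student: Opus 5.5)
We already know the key facts about $\Phi^{ex}$. By Theorem~\ref{thm:inj} (together with $\Phi^{ex}([x,y])=C$) it is injective, weight preserving and lands in $\Lie(C,[B,C])$. By Lemma~\ref{lem:PhiLie} it is a morphism of Lie algebras. It therefore only remains to prove surjectivity onto $\Lie(C,[B,C])$. The plan is to show that the image, which is a Lie subalgebra of $\Lie(C,[B,C])$, contains a generating set of $\Lie(C,[B,C])$.

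By Lemma~\ref{lem:target}, $\Lie(C,[B,C])$ is generated as a Lie algebra by $C$ and $g_1=[B,C]$. We have $C=\Phi^{ex}([x,y])$ in the image. For $g_1$, compute $\Phi^{ex}(\sigma_3)$. Here $\sigma_3=[x+y,[x,y]]\in\grt(\F_2)_3$ by Example~\ref{ex:low}. Under $\iota^{-1}$, the term $[x,[x,y]]$ equals $\ad_x^2y=\iota(\ad_A B)=\iota([A,B])$, and $[y,[x,y]]=\iota([B,C])$. Hence $\iota^{-1}\sigma_3=[A,B]+[B,C]$, and setting $A=0$ gives $\Phi^{ex}(\sigma_3)=[B,C]=g_1$.

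Since the image is a Lie subalgebra containing both generators, it equals $\Lie(C,[B,C])$. Thus $\Phi^{ex}$ is a bijective, weight preserving Lie morphism, i.e.\ an isomorphism of weight graded Lie algebras.

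The step needing the most care is the bookkeeping that $\sigma_3$ lies in $\grt(\F_2)$ and that its image is exactly $[B,C]$. This is a direct low-weight check using Lemma~\ref{lem:abc}, so I do not expect a genuine obstacle; everything else is formal.

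As a remark toward Theorem~\ref{thm:main 2}: applying $\Phi^{ex}$ to the recursion \eqref{eq:generator-recursion} gives $\Phi(\sigma_{2m+3})=[C,\Phi(\sigma_{2m+1})]$. Inductively, $\Phi(\sigma_{2m+1})=g_m$, and these freely generate $\Lie(g_1,g_2,\dots)=\Phi(\grt(\F_2))$.
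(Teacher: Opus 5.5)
Your proof is correct and follows the paper's argument: injectivity and the target $\Lie(C,[B,C])$ come from Theorem~\ref{thm:inj}, the Lie morphism property from Lemma~\ref{lem:PhiLie}, and surjectivity holds because the image is a Lie subalgebra containing the generators $C=\Phi^{ex}([x,y])$ and $[B,C]=\Phi^{ex}(\sigma_3)$. Your explicit computation $\iota^{-1}\sigma_3=[A,B]+[B,C]$ spells out a step that the paper states without comment.
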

\begin{proof}
    By Theorem \ref{thm:inj} the morphism is injective and takes values in $\Lie(C,[B,C])$. Since $\Phi^{ex}([x,y])=C$ and $\Phi^{ex}(\sigma_3)=\Phi(\sigma_3)=[B,C]$ both generators are in the image and the result follows.
\end{proof}

\begin{proof}[Proof of Theorem \ref{thm:main 2}]
    Restricting Corollary \ref{cor:main 3} to weights $\geq 3$ we see that $\Phi\colon \grt(\F_2)\to \Lie(C,[B,C])_{\geq 3}$ is an isomorphism of weight graded Lie algebras.
    By Lemma \ref{lem:target} we know that $\Lie(C,[B,C])_{\geq 3}$ is a free Lie algebra in generators $g_m=\ad_C^mB$, $m=1,2,\dots$. 
    We will show by induction that $\Phi(\sigma_{2m+1})=g_m$, thus proving Theorem \ref{thm:main 2}.
    Indeed, $\Phi(\sigma_3)=[B,C]$ and using Corollary \ref{cor:main 3} and the induction hypothesis we moreover have 
    \begin{align*}
    \Phi(\sigma_{2m+3})
    &=
    \Phi^{ex}(\sigma_{2m+3})
    =
    \Phi^{ex}(\{[x,y],\sigma_{2m+1}\})
    \\&=
    [\Phi^{ex}([x,y]), \Phi^{ex}(\sigma_{2m+1}) ]
    =
    [C,g_m] = g_{m+1}
    \end{align*}
    as desired.
\end{proof}

\subsection{The lower bound and independent proof of injectivity}
\label{sec:injects}
We note that the original OpenAI proof \cite{OpenAI} of the Deligne--Drinfeld conjecture is self-contained and does not rely on the injectivity of \eqref{equ:DD map} proven by Brown \cite{Brown}. It is also possible to give a self-contained proof of injectivity of \eqref{equ:DD map} from Theorem \ref{thm:main 2} as follows.

First, let $\Z_{(2)}=\{\frac{p}{q} \mid \text{$q$ odd}\}\subset \Q$ be the localization of $\Z$ at the prime 2. Define $L:=\grt(\Q)\cap \Lie_{\Z_{(2)}}(x,y)$ and let $L_n$ be its weight $n$ component.
Then $L_n$ is free since $\Z_{(2)}$ is a PID and $L_n$ is a submodule of the free module $\Lie_{\Z_{(2)}}(x,y)$. A $\Z_{(2)}$-basis of $L_n$ is also a $\Q$-basis of $\grt(\Q)_n$ and descends to an $\F_2$-basis of $L_n/2L_n$, so that all three modules have the same rank.
The same is also true for the weight $n$ parts of the Chevalley--Eilenberg complexes of the Lie algebras $\grt(\Q)$, $L$, $L/2L$. Since the $\Q$-ranks of the differentials in the Chevalley--Eilenberg complex are at least as large as the $\F_2$-ranks, we have in particular 
\begin{equation}\label{eq:H2grt}
\vdim_{\Q} H_2(\grt(\Q))_n \leq \vdim_{\F_2} H_2(L/2L)_n,
\end{equation}
where $H_2(-)$ denotes the Chevalley--Eilenberg (Lie algebra) homology. 
Now $L/2L$ injects into $\grt(\F_2)$. Since the latter is free by Theorem \ref{thm:main 2}, so is $L/2L$ by the Shirshov--Witt Theorem \cite[Theorem~2.5]{Reutenauer}. Hence $H_2(L/2L)=0$ and by \eqref{eq:H2grt} also $H_2(\grt(\Q))=0$. A positively graded Lie algebra with vanishing second homology
is free, see \cite[Corollary~5.7]{HainTorelli}, and thus $\grt(\Q)$ is free.

Now the images of the generators $\sigma_m$ under \eqref{equ:DD map} have nonzero depth one terms, which cannot appear in any Lie (Ihara) bracket in $\grt(\Q)$. The classes of the $\sigma_{2j+1}$ in the abelianization of $\grt(\Q)$ are therefore nonzero and, having distinct weights, linearly independent. Since $\grt(\Q)$ is free, \cite[Theorem~2.9]{Reutenauer} implies that these elements
freely generate a Lie subalgebra. This proves the injectivity of \eqref{equ:DD map}, and together with Theorem~\ref{thm:main 1} and the argument of Section \ref{sec:2reduction} that \eqref{equ:DD map} is an isomorphism.

\bibliographystyle{alpha}
\bibliography{refs}

\end{document}